\newtheorem{them}{Theorem}[section]
\newtheorem{defn}{Definition}[section]
\newtheorem{lem}[defn]{Lemma}
\newtheorem{cor}[defn]{Corollary}
\newtheorem{prop}[defn]{Proposition}
\newtheorem{error}{Error}[section]
\newtheorem{exam}[error]{Example}
\newtheorem{con}[defn]{Conjecture}
\newtheorem{rem}[defn]{Remark}
\numberwithin{equation}{section}
\newcommand\keywordsname{Key words}
\newcommand\AMSname{AMS subject classifications}
\newenvironment{@abssec}[1]{%
     \if@twocolumn
       \section*{#1}%
     \else
       \vspace{.05in}\footnotesize
       \parindent .2in
         {\upshape\bfseries #1. }\ignorespaces
     \fi}
     {\if@twocolumn\else\par\vspace{.1in}\fi}
\begin{document}

\title{Spectrum of a class of matrices and its applications\footnote{L. You's research is supported by the Zhujiang Technology New Star Foundation of
Guangzhou (Grant No. 2011J2200090) and Program on International Cooperation and Innovation, Department of Education,
Guangdong Province (Grant No.2012gjhz0007).}}

\author{ Lihua You
\footnote{{\it{Corresponding author:\;}}ylhua@scnu.edu.cn.} %; {\it{phone number:\;}}+8613660067717.}
 \qquad Man Yang \footnote{{\it{Email address:\;}}784623899@qq.com. }
 %; {\it{phone number:\;}}+8618602062229.}
\qquad Jinxi Li\footnote{{\it{Email address:\;}}784623899@qq.com. }
\qquad Liyong Ren\footnote{{\it{Email address:\;}}275764430@qq.com. }
 }\vskip.2cm
\date{{\small
School of Mathematical Sciences, South China Normal University,\\
Guangzhou, 510631, P.R. China\\
}}
\maketitle

\noindent {\bf Abstract }
In this paper, we give the spectrum of a matrix by using the quotient matrix, then we apply this result to various matrices associated
 to a graph and a digraph, including adjacency matrix, (signless) Laplacian matrix, distance matrix, distance (signless) Laplacian matrix,
to obtain some known and new results. Moreover, we propose some problems for further research.

{\it \noindent {\bf AMS Classification:} } 05C50, 05C35, 05C20, 15A18

{\it \noindent {\bf Keywords:}} Matrix; Quotient matrix;   Graph; Digraph; Spectrum; Spectral radius

\section{Introduction}

\hskip.5cm
We begin by recalling some definitions.
Let $M$ be an $n\times n$  matrix, $\lambda_1, \lambda_2, \ldots, \lambda_n$ be the eigenvalues of $M$.
 It is obvious that the eigenvalues may be complex numbers since $M$ is not symmetric in general.
 We usually assume that $|\lambda_1|\geq |\lambda_2|\geq\ldots \geq |\lambda_n|$.
 The spectral radius of $M$ is defined as $\rho(M)=|\lambda_1|$, i.e., it is the largest modulus
 of the eigenvalues of $M$.   If $M$ is a nonnegative matrix, it follows from the Perron-Frobenius theorem that
 %$\lambda_1$ is a nonnegative number and
 the spectral radius $\rho(M)$ is a eigenvalue of $M$.
 If $M$ is a nonnegative irreducible  matrix, it follows from the Perron-Frobenius theorem that
 $\rho(M)=\lambda_1$ is simple.

Let $G$ be a connected graph with vertex set $V(G)$ and edge set $E(G)$. Let $A(G)=(a_{ij})$ denote the adjacency matrix of $G$, where $a_{ij}$ is equal to the number of edges $v_iv_j$. The spectral radius of $A(G)$, denoted by $\rho(G)$, is called the spectral radius of $G$. Let $diag(G)=diag(d_1, d_2, \ldots, d_n)$ be the diagonal matrix with degree of the vertices of $G$ and $Q(G)=diag(G)+A(G)$ be the signless Laplacian matrix of $G$, $L(G)=diag(G)-A(G)$ be the Laplacian matrix of $G$.
 The spectral radius of $Q(G)$, denoted by $q(G)$, is called the signless Laplacian spectral radius of $G$. The spectral radius of $L(G)$, denoted by $\mu(G)$, is called the Laplacian spectral radius of $G$.

For $u,v\in V(G)$, the distance between $u$ and $v$, denoted by $d_G(u,v)$ or $d_{uv}$,
is the length of the shortest path connecting them in $G$.
For $u\in V(G)$, the transmission of vertex $u$ in $G$ is the sum of distances  between $u$ and
all other vertices of $G$, denoted by $Tr_G(u)$.

Let $G$ be a connected graph with vertex set $V(G)=\{v_1, v_2, \ldots, v_n\}$.
The distance matrix of $G$ is the $n\times n$ matrix $\mathcal{D}(G)=(d_{ij})$ where $d_{ij}=d_{v_iv_j}$.
The distance  spectral radius of $G$, denoted by $\rho^{\mathcal{D}}(G)$, is the spectral radius of $\mathcal{D}(G)$,
 which is the largest eigenvalue of $\mathcal{D}(G)$.

In fact, for $1\leq i\leq n$, the transmission of vertex $v_i$, $Tr_G(v_i)$ is just the $i$-th row sum
 of $\mathcal{D}(G)$. Let $Tr(G)=diag(Tr_G(v_1), Tr_G(v_2), \ldots, Tr_G(v_n))$ be the diagonal matrix of vertex transmission of $G$. M. Aouchiche and P. Hansen \cite{2013LAA7} introduced the Laplacian and the signless Laplacian for the distance matrix of a connected graph. The matrix $\mathcal{L}(G)=Tr(G)-\mathcal{D}(G)$ is called the distance Laplacian of $G$, while the matrix $\mathcal{Q}(G)=Tr(G)+\mathcal{D}(G)$ is called the distance signless Laplacian matrix of $G$.
 \noindent It is obvious that $\mathcal{Q}(G)$ is irreducible, nonnegative, symmetric and positive semidefinite.
 The distance signless Laplacian spectral radius of $G$, denoted by $q^{\mathcal{D}}(G)$,  is the spectral radius of $\mathcal{Q}(G)$,
 which is the largest eigenvalue of $\mathcal{Q}(G)$.  The spectral radius of $\mathcal{L}(G)$, denoted by $\mu^{\mathcal{D}}(G)$, is called the distance Laplacian spectral radius of $G$.

Since $G$ is a connected graph, then $A(G)$, $Q(G)$, $\mathcal{D}(G)$ and  $\mathcal{Q}(G)$ are nonnegative  irreducible matrices, it follows  from the Perron Frobenius Theorem that
  $\rho(G)$,  $q(G)$,  $\rho^{\mathcal{D}}(G)$ and $q^{\mathcal{D}}(G)$
   are real numbers and there is a positive unit eigenvector corresponding to $\rho(G)$,  $q(G)$,  $\rho^{\mathcal{D}}(G)$ and $q^{\mathcal{D}}(G)$, respectively.

Let $\overrightarrow{G}=(V(\overrightarrow{G}), E(\overrightarrow{G}))$ be a digraph,
where $V(\overrightarrow{G})=\{v_1, v_2,
\ldots, v_n \}$ and $E(\overrightarrow{G})$ are the  vertex set and arc set of $\overrightarrow{G}$,
respectively. A digraph $\overrightarrow{G}$ is simple if it has no loops and
multiple arcs. A digraph  $\overrightarrow{G}$ is strongly connected if for every
pair of vertices $v_i, v_j\in V(\overrightarrow{G})$, there are  directed paths from $v_i$
to $v_j$ and from $v_j$ to $v_i$. In this paper, we consider finite, simple strongly connected digraphs.
%We follow \cite{1976, 1979, 1986, 1988, 2001} for terminology and notations.

Let $\overrightarrow{G}$ be a digraph. If two vertices are connected by an arc, then they are called adjacent.
For $e=(v_i, v_j)\in E(\overrightarrow{G})$, $v_i$ is the tail (the initial vertex) of $e$,
$v_j$ is the head (the terminal vertex)  of $e$. %or vertex $v_i$ is a tail of vertex $v_j$ and $v_j$ is a head of $v_i$.

Let $N^-_{\overrightarrow{G}}(v_i)=\{v_j\in V(\overrightarrow{G})|(v_j, v_i)\in E(\overrightarrow{G})\}$ and
$N^+_{\overrightarrow{G}}(v_i)=\{v_j\in V(\overrightarrow{G})|$ $(v_i, v_j) \in E(\overrightarrow{G})\}$
denote the in-neighbors and out-neighbors of $v_i$, respectively.

%Let $\overrightarrow{P_n}$ and $\overrightarrow{C_n}$ denote the directed path and the directed cycle on $n$ vertices, respectively.
%Let $\overset{\longleftrightarrow}{K_n}$ denote the complete digraph on $n$ vertices in which two arbitrary vertices
%$v_i, v_j\in V(\overset{\longleftrightarrow}{K_n})$, there are arcs $(v_i, v_j), (v_j, v_i)\in E(\overset{\longleftrightarrow}{K_n})$.

 For a digraph $\overrightarrow{G}$, let $A(\overrightarrow{G})=(a_{ij})$ denote the adjacency matrix of $\overrightarrow{G}$,
 where $a_{ij}$ is equal to the number of arcs $(v_i, v_j)$.
 The spectral radius of $A(\overrightarrow{G})$, denoted by $\rho(\overrightarrow{G})$, is called the spectral radius of $\overrightarrow{G}$.

 Let $diag(\overrightarrow{G})=diag(d^+_1, d^+_2, \ldots, d^+_n)$ be the diagonal matrix with outdegree of the vertices of  $\overrightarrow{G}$ and $Q(\overrightarrow{G})= diag(\overrightarrow{G})+A(\overrightarrow{G})$
 be the signless Laplacian matrix of $\overrightarrow{G}$, $L(\overrightarrow{G})=diag(\overrightarrow{G})-A(\overrightarrow{G})$ be the Laplacian matrix of $\overrightarrow{G}$.
 The spectral radius of $Q(\overrightarrow{G})$,  $\rho(Q(\overrightarrow{G}))$, denoted by $q(\overrightarrow{G})$,
 is called the signless Laplacian spectral radius of $\overrightarrow{G}$.
% Let $P_{\overrightarrow{G}}(x)$ denote the signless Laplacian characteristic polynomial of $Q(\overrightarrow{G})$.

For $u,v\in V(G)$, the distance from $u$ to $v$, denoted by $d_{\overrightarrow{G}}(u,v)$ or $d_{uv}$,
is the length of the shortest directed path from $u$ to $v$ in ${\overrightarrow{G}}$.
For $u\in V({\overrightarrow{G}})$, the transmission of vertex $u$ in ${\overrightarrow{G}}$ is the sum of distances
 from $u$ to all other vertices of ${\overrightarrow{G}}$, denoted by $Tr_{{\overrightarrow{G}}}(u)$.

Let ${\overrightarrow{G}}$ be a connected digraph with vertex set $V({\overrightarrow{G}})=\{v_1, v_2, \ldots, v_n\}$.
The distance matrix of ${\overrightarrow{G}}$ is the $n\times n$ matrix $\mathcal{D}({\overrightarrow{G}})=(d_{ij})$
where $d_{ij}=d_{\overrightarrow{G}}(v_i,v_j)$.
The distance  spectral radius of $\overrightarrow{G}$, denoted by $\rho^{\mathcal{D}}(\overrightarrow{G})$,
is the spectral radius of $\mathcal{D}(\overrightarrow{G})$.
%which is the largest eigenvalue of $\mathcal{D}(\overrightarrow{G})$.

In fact, for $1\leq i\leq n$, the transmission of vertex $v_i$, $Tr_{\overrightarrow{G}}(v_i)$ is just the $i$-th row sum
 of $\mathcal{D}(\overrightarrow{G})$.
 Let $Tr(\overrightarrow{G})=diag(Tr_{\overrightarrow{G}}(v_1), Tr_{\overrightarrow{G}}(v_2), \ldots, Tr_{\overrightarrow{G}}(v_n))$ be the diagonal matrix of vertex transmission of $\overrightarrow{G}$.
 The distance signless Laplacian matrix of ${\overrightarrow{G}}$ is the $n\times n$ matrix defined similar to the undirected graph
 by Aouchiche and Hansen as

 $$\mathcal{Q}({\overrightarrow{G}})=Tr({\overrightarrow{G}})+\mathcal{D}({\overrightarrow{G}}).$$

 \noindent Let $\mathcal{L}(\overrightarrow{G})=Tr({\overrightarrow{G}})-\mathcal{D}({\overrightarrow{G}})$ be the distance Laplacian matrix of $\overrightarrow{G}$.
 The distance signless Laplacian spectral radius of $\overrightarrow{G}$, $\rho(\mathcal{Q}(\overrightarrow{G}))$,
 denoted by $q^{\mathcal{D}}(\overrightarrow{G})$,  is the spectral radius of $\mathcal{Q}(\overrightarrow{G})$.
 %which is the largest eigenvalue of $\mathcal{Q}(G)$.

Since $\overrightarrow{G}$ is a simple strongly connected digraph,
then $A(\overrightarrow{G})$, $Q(\overrightarrow{G})$, $\mathcal{D}({\overrightarrow{G}})$ and  $\mathcal{Q}({\overrightarrow{G}})$ are nonnegative irreducible matrices.
It follows from the Perron Frobenius Theorem that
  $\rho(\overrightarrow{G})$, $\rho(Q(\overrightarrow{G}))=q(\overrightarrow{G})$, $\rho^{\mathcal{D}}(\overrightarrow{G})$ and $\rho(\mathcal{Q}(\overrightarrow{G}))=q^{\mathcal{D}}(\overrightarrow{G})$
  are positive real numbers and there is a positive unit eigenvector corresponding to $\rho(\overrightarrow{G})$, $q(\overrightarrow{G})$, $\rho^{\mathcal{D}}(\overrightarrow{G})$ and $q^{\mathcal{D}}(\overrightarrow{G})$, respectively.

For a connected graph $G=(V(G), E(G))$, the vertex connectivity of a graph denoted by $\kappa(G)$, is the minimum number of vertices whose deletion yields the resulting graph disconnected. Clearly, let $G$ be a connected graph on $n$ vertices, then $1\leq \kappa(G)\leq n-1$.
Similarly, for a strongly connected digraph $\overrightarrow{G}=(V(\overrightarrow{G}), E(\overrightarrow{G}))$, the vertex connectivity of a digraph denoted by $\kappa(\overrightarrow{G})$, is the minimum number of vertices whose deletion yields the resulting digraph non-strongly connected. Clearly, let $\overrightarrow{G}$ be  a strongly connected digraph with $n$ vertices, then $1\leq \kappa(\overrightarrow{G})\leq n-1$.

There are many literatures about graphs' and digraphs' connectivity. For early work, see \cite{2009LAA}, Ye-Fan-Liang characterize the graphs with the minimal least eigevalue among all  graphs with given vertex connectivity or edge connectivity. In 2010, Ye-Fan-Wang \cite{2010LAA} characterize the graphs with maximum signless Laplacian or adjacency spectral radius among all graphs with fixed order and given vertex or edge connectivity. Liu \cite{2010b} characterized the minimal distance spectral radius of  simple connected graphs with given vertex connectivity, or matching number, or chromatic number, respectively. Brualdi \cite{2010LAA1} wrote a stimulating survey on this topic.

In 2012, Lin-Shu-Wu-Yu \cite{2012DM} establish some upper or lower bounds for digraphs with some given graph parameters, such as clique number, girth, and vertex connectivity, and characterize the corresponding extremal graphs, give the exact value of the spectral radii of those digraphs. Besides, Lin-Yang-Zhang-Shu \cite{2012DM2} characterize the extremal digraphs (graphs) with minimum distance spectral radius among among all digraphs (graphs) with given vertex (edge) connectivity.

In 2013, Lin-Drury \cite{2013DM} characterize the extremal digraphs which attain the maximum Perron root of digraphs with given arc connectivity and number of vertices. Lin-Shu \cite{2013DAM} determine the extremal digraph with the minimal distance spectral radius with given arc connectivity. Xing-Zhou \cite{2013} determine the graphs with minimal distance signless Laplacian spectral radius among the connected graphs with fixed number of vertices and connectivity. Oscar Rojo and Eber Lenes \cite{2013LAA3} obtained a sharp upper bound on the incidence energy of graphs in terms of connectivity. Furthermore, some upper or lower bounds were obtained by the outdegrees and the average 2-outdegrees \cite{2013ARS, 2013LAA}.

In 2014, Hong-You \cite{2014} determine the digraphs with maximal signless Laplacian spectral radius among the strongly connected digraphs with given vertex connectivity. On the other hand, some extremal digraphs which attain the maximum or minimum spectral radius, the signless Laplacian spectral radius, the distance spectral radius,  or the distance signless Laplacian  spectral radius of digraphs with given parameters, such as given vertex connectivity, given arc connectivity,  given dichromatic number, given clique number, given girth  and so on, were characterized, see e.g. \cite{2013DM, 2013DAM, 2012DM, 2012DM2, 2013}.

In this paper, we give the spectrum of a matrix using the quotient matrix, and also apply these results to various matrices associated with graphs and digraphs as mentioned above. Some know results are improved.

\section{Some preliminaries}
\begin{defn}\label{defn21}{\rm(\cite{1979, 1986})}
Let $A=(a_{ij}), B=(b_{ij})$ be $n\times n$ matrices. If $a_{ij}\leq b_{ij}$ for all $i$ and $j$, then $A\leq B$.
If $A\leq B$ and $A\neq B$, then $A< B$. If $a_{ij} < b_{ij}$ for all $i$ and $j$, then $A \ll B$.
\end{defn}

\begin{lem}\label{lem22}{\rm(\cite{1979, 1986})}
Let $A, B$ be $n\times n$ matrices with the spectral radius $\rho(A)$ and $\rho(B)$. If $0\leq A\leq B$, then $\rho(A) \leq \rho(B)$.
Furthermore, if $B$ is irreducible and $0\leq A<B$, then $\rho(A) < \rho(B)$.
\end{lem}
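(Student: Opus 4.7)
The plan is to treat the two parts separately, both via classical Perron--Frobenius arguments. For the inequality $\rho(A) \leq \rho(B)$ when $0 \leq A \leq B$, I would rely on Gelfand's formula $\rho(M)=\lim_{k\to\infty}\|M^k\|^{1/k}$ together with a monotone matrix norm. A short induction shows that $0 \leq A^k \leq B^k$ entrywise for every $k \geq 1$, since the product of nonnegative matrices preserves the entrywise ordering. Picking any norm that respects this ordering on nonnegative matrices (for instance the sum of all entries) yields $\|A^k\| \leq \|B^k\|$, and taking $k$-th roots and letting $k \to \infty$ gives the desired inequality.

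For the strict version, I would argue by contradiction. Suppose $B$ is irreducible, $0 \leq A < B$, yet $\rho(A) = \rho(B)$. By the Perron--Frobenius theorem applied to the irreducible nonnegative matrix $B$, there exists a left eigenvector $y$ of $B$ with every entry strictly positive, satisfying $y^{T} B = \rho(B)\, y^{T}$. Applying Perron's theorem to the nonnegative matrix $A$ gives a nonzero nonnegative right eigenvector $x$ with $A x = \rho(A)\, x$. A direct computation then produces
\[
y^{T}(B-A)x \;=\; y^{T} B x - y^{T} A x \;=\; \rho(B)\, y^{T} x - \rho(A)\, y^{T} x \;=\; 0.
\]
Since all entries of $y$ are positive and $(B-A)x \geq 0$ componentwise, this forces $(B-A)x = 0$, and hence $B x = A x = \rho(B)\, x$.

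The punch line is that $x$ is now a nonzero nonnegative eigenvector of the irreducible matrix $B$ for the eigenvalue $\rho(B)$; invoking Perron--Frobenius a second time shows that $x$ must in fact have all entries strictly positive. But then $(B-A)x = 0$ with $B-A \geq 0$ and every $x_j > 0$ forces each entry of $B-A$ to vanish, contradicting $A < B$. The only real subtlety is deploying irreducibility of $B$ in exactly those two places: first to produce the positive left eigenvector $y$, then to upgrade the nonnegative eigenvector $x$ of $B$ to a strictly positive one. Without irreducibility, easy block-diagonal examples show that $\rho(A) = \rho(B)$ is compatible with $A < B$, so the hypothesis cannot be dropped.
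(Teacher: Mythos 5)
Your proof is correct. The paper offers no argument for this lemma at all --- it is quoted as a known result from the Berman--Plemmons and Horn--Johnson texts --- so there is nothing internal to compare against; your two-step argument (entrywise monotonicity of powers plus Gelfand's formula for the weak inequality, then pairing the positive left Perron vector of $B$ with a nonnegative eigenvector of $A$ to force $B-A=0$ when the spectral radii coincide, contradicting $A\neq B$) is exactly the standard textbook proof, with irreducibility invoked in precisely the two places where it is needed.
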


By Lemma \ref{lem22}, we have the following  results  in terms of digraphs.

\begin{cor}\label{cor23}
Let $\overrightarrow{G}$ be a digraph and $\overrightarrow{H}$ be a spaning subdigraph of $\overrightarrow{G}$.
Then

{\rm (i) }$\rho(\overrightarrow{H})\leq \rho(\overrightarrow{G})$, $q(\overrightarrow{H})\leq q(\overrightarrow{G})$.

{\rm (ii) } If $\overrightarrow{G}$ is strongly connected, and $\overrightarrow{H}$ is a proper  subdigraph of $\overrightarrow{G}$,
then $\rho(\overrightarrow{H})< \rho(\overrightarrow{G})$, $q(\overrightarrow{H})< q(\overrightarrow{G})$.

{\rm (iii) } If $\overrightarrow{G}$ and $\overrightarrow{H}$ are  strongly connected,
then $\rho^{\mathcal{D}}(\overrightarrow{H})\geq \rho^{\mathcal{D}}(\overrightarrow{G})$,
          $q^{\mathcal{D}}(\overrightarrow{H})\geq q^{\mathcal{D}}(\overrightarrow{G})$.

{\rm (iv) } If $\overrightarrow{H}$ is a proper  subdigraph of $\overrightarrow{G}$,
then $\rho^{\mathcal{D}}(\overrightarrow{H})> \rho^{\mathcal{D}}(\overrightarrow{G})$,
     $q^{\mathcal{D}}(\overrightarrow{H})> q^{\mathcal{D}}(\overrightarrow{G})$.

The theorem for undirected graph is also established.
\end{cor}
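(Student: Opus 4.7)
The plan is to apply Lemma~\ref{lem22} four times, once per spectral parameter, with the real work concentrated in producing the correct entrywise comparison between the matrices attached to $\overrightarrow{H}$ and $\overrightarrow{G}$. Throughout, since $\overrightarrow{H}$ is a spanning subdigraph, all matrices have the same size and can be compared entry by entry.

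For parts (i) and (ii), I would simply observe that every arc of $\overrightarrow{H}$ is an arc of $\overrightarrow{G}$, which gives $0\le A(\overrightarrow{H})\le A(\overrightarrow{G})$ entrywise and, because the out-degrees also satisfy $d^+_{\overrightarrow{H}}(v_i)\le d^+_{\overrightarrow{G}}(v_i)$, $0\le Q(\overrightarrow{H})\le Q(\overrightarrow{G})$. Lemma~\ref{lem22} then yields (i). For (ii), strong connectedness of $\overrightarrow{G}$ makes $A(\overrightarrow{G})$ and $Q(\overrightarrow{G})$ irreducible, and any arc in $E(\overrightarrow{G})\setminus E(\overrightarrow{H})$ forces the comparisons $A(\overrightarrow{H})<A(\overrightarrow{G})$ and $Q(\overrightarrow{H})<Q(\overrightarrow{G})$ to be strict; the strict clause of Lemma~\ref{lem22} closes the argument.

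For (iii) the direction of the inequality reverses, and this is the conceptual heart of the proof. The idea is that every directed walk in $\overrightarrow{H}$ is also a directed walk in $\overrightarrow{G}$, so $d_{\overrightarrow{H}}(v_i,v_j)\ge d_{\overrightarrow{G}}(v_i,v_j)$ for every pair. This yields $\mathcal{D}(\overrightarrow{H})\ge \mathcal{D}(\overrightarrow{G})$ entrywise, and summing rows gives $Tr(\overrightarrow{H})\ge Tr(\overrightarrow{G})$, hence $\mathcal{Q}(\overrightarrow{H})=Tr(\overrightarrow{H})+\mathcal{D}(\overrightarrow{H})\ge \mathcal{Q}(\overrightarrow{G})$. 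Two applications of Lemma~\ref{lem22} then produce both inequalities.

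For (iv), the main subtlety is upgrading the inequalities from (iii) to strict ones. I would pick an arc $(v_i,v_j)\in E(\overrightarrow{G})\setminus E(\overrightarrow{H})$; then $d_{\overrightarrow{G}}(v_i,v_j)=1$ while $d_{\overrightarrow{H}}(v_i,v_j)\ge 2$, so the $(i,j)$-entry of $\mathcal{D}$ and the $(i,i)$-entry of $Tr$ both increase strictly, giving $\mathcal{D}(\overrightarrow{G})<\mathcal{D}(\overrightarrow{H})$ and $\mathcal{Q}(\overrightarrow{G})<\mathcal{Q}(\overrightarrow{H})$. Irreducibility of $\mathcal{D}(\overrightarrow{G})$ and $\mathcal{Q}(\overrightarrow{G})$ (from strong connectedness of $\overrightarrow{G}$, noting that adding the diagonal $Tr(\overrightarrow{G})$ does not disturb irreducibility) then lets the strict clause of Lemma~\ref{lem22} finish the proof. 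The undirected statement follows by exactly the same argument applied to the symmetric versions of these matrices. The only step that requires care is the strict distance jump from $1$ to at least $2$ in (iv); once that observation is in place, the whole corollary reduces to bookkeeping with Lemma~\ref{lem22}.
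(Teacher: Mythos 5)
Your proposal is correct and follows the same route as the paper, which states this corollary without proof as an immediate consequence of Lemma~\ref{lem22} via exactly these entrywise comparisons (arcs give $A,Q$ monotone increasing, distances give $\mathcal{D},\mathcal{Q}$ monotone decreasing, with strictness supplied by an arc of $E(\overrightarrow{G})\setminus E(\overrightarrow{H})$). One small correction in (iv): the strict clause of Lemma~\ref{lem22} requires the \emph{larger} matrix to be irreducible, which here is $\mathcal{D}(\overrightarrow{H})$ (resp.\ $\mathcal{Q}(\overrightarrow{H})$), not $\mathcal{D}(\overrightarrow{G})$; this is harmless since $\overrightarrow{H}$ must be strongly connected for its distance matrix to be defined, so all off-diagonal entries of $\mathcal{D}(\overrightarrow{H})$ are positive and irreducibility is automatic.
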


\begin{lem}\label{lem24}{\rm(\cite{1979})}
If $A$ is an $n\times n$ nonnegative matrix with the spectral radius $\rho(A)$ and row sums $r_1, r_2, \ldots, r_n$, then
$\min\limits_{1\leq i\leq n}r_i\leq \rho(A)\leq \max\limits_{1\leq i\leq n}r_i$. Moreover, if $A$ is irreducible, then one of the equalities holds if and only if the row sums of $A$ are all equal.
\end{lem}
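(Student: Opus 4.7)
The plan is to use the all-ones vector $\mathbf{1}=(1,1,\ldots,1)^T$ together with the Perron--Frobenius theory to pin down both the inequalities and the equality condition. Observe first that $A\mathbf{1}$ is the column vector whose $i$-th entry is the row sum $r_i$, so component-wise $(\min_i r_i)\mathbf{1}\le A\mathbf{1}\le (\max_i r_i)\mathbf{1}$.

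For the bounds themselves, I would invoke the Collatz--Wielandt characterisation of the spectral radius of a nonnegative matrix, namely
\begin{equation*}
\rho(A)=\sup_{x>0}\min_i\frac{(Ax)_i}{x_i}=\inf_{x>0}\max_i\frac{(Ax)_i}{x_i},
\end{equation*}
and simply plug in $x=\mathbf{1}$ to read off $\min_i r_i\le\rho(A)\le\max_i r_i$. (Alternatively, an even more elementary argument works: for the upper bound iterate $A^k\mathbf{1}\le(\max_i r_i)^k\mathbf{1}$ and take $k$-th roots using Gelfand's formula $\rho(A)=\lim_k\|A^k\|^{1/k}$, and for the lower bound pair this with the fact, provided by the Perron--Frobenius theorem and already quoted in Section~1, that $\rho(A)$ is an eigenvalue with a nonnegative eigenvector.)

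For the equality case, assume $A$ is irreducible and let $u=(u_1,\ldots,u_n)^T\gg 0$ be a Perron eigenvector with $Au=\rho(A)u$. Suppose $\rho(A)=\max_i r_i$, pick an index $k$ with $u_k=\max_i u_i>0$, and compare
\begin{equation*}
\rho(A)\,u_k=(Au)_k=\sum_{j}a_{kj}u_j\le u_k\sum_{j}a_{kj}=r_k u_k\le\bigl(\max_i r_i\bigr)u_k=\rho(A)\,u_k.
\end{equation*}
Equality throughout forces $a_{kj}(u_k-u_j)=0$ for every $j$, so $u_j=u_k$ whenever $a_{kj}>0$; propagating this along directed walks and using the irreducibility of $A$ gives $u_j=u_k$ for all $j$. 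Hence $u$ is a positive multiple of $\mathbf{1}$, so $A\mathbf{1}=\rho(A)\mathbf{1}$, i.e.\ all row sums equal $\rho(A)$. The case $\rho(A)=\min_i r_i$ is handled symmetrically by choosing $k$ with $u_k=\min_i u_i$. Conversely, if all row sums equal a common value $r$, then $A\mathbf{1}=r\mathbf{1}$ and (since $\mathbf{1}>0$ and $A$ is irreducible) $r=\rho(A)=\min_i r_i=\max_i r_i$.

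The only real obstacle is the equality half for irreducible $A$; the inequalities are essentially a one-line substitution, but the equality case genuinely needs the existence of a strictly positive Perron eigenvector (from the irreducible Perron--Frobenius theorem, already recalled in the introduction) together with the propagation-along-walks argument to conclude that $u$ is constant.
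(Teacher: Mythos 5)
Your proof is correct, but there is nothing in the paper to compare it with: Lemma \ref{lem24} is quoted from Berman--Plemmons \cite{1979} as a known fact and the authors give no proof. Your argument is the standard one -- test the vector $\mathbf{1}$ for the two bounds, and use a strictly positive Perron eigenvector plus a maximum-principle propagation for the equality case under irreducibility -- and it is essentially complete. Two points of precision are worth fixing. First, the displayed identity $\rho(A)=\sup_{x\gg 0}\min_i (Ax)_i/x_i$ is not valid for every nonnegative matrix (e.g.\ $A=\mathrm{diag}(1,2)$ gives supremum $1<2=\rho(A)$); the correct Collatz--Wielandt form takes the supremum over nonnegative nonzero $x$ with the minimum restricted to the support of $x$. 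You only use the inequality directions at $x=\mathbf{1}$, which are true for any positive test vector (if $Ax\ge\alpha x$ with $x\gg 0$ then $A^kx\ge\alpha^kx$ and hence $\rho(A)\ge\alpha$, and dually $Ax\le\beta x$ gives $\rho(A)\le\beta$), so the proof survives, but state the formula carefully or just prove these two one-line inequalities directly; the parenthetical ``nonnegative eigenvector'' alternative for the lower bound really wants a \emph{left} Perron vector $w^T A=\rho(A)w^T$, since a right one may vanish on the relevant coordinates when $A$ is reducible. Second, in the propagation step say explicitly why the argument can be repeated at a neighbour $j$ of $k$: because $u_j=u_k=\max_i u_i$, the same chain of inequalities collapses at index $j$, so constancy of $u$ spreads along all arcs leaving $j$, and irreducibility makes every index reachable from $k$. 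Finally, the converse direction needs no irreducibility at all: if every row sum equals $r$, the already-proved bounds sandwich $\rho(A)$ between $r$ and $r$.
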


By Lemma \ref{lem24}, we have  $\rho(\overset{\longleftrightarrow}{K_n})=\rho^{\mathcal{D}}(\overset{\longleftrightarrow}{K_n})=n-1$, $q(\overset{\longleftrightarrow}{K_n})=q^{\mathcal{D}}(\overset{\longleftrightarrow}{K_n})=2(n-1)$;
$\rho(\overrightarrow{C_n})=1$, $q(\overrightarrow{C_n})=2$, $\rho^{\mathcal{D}}(\overrightarrow{C_n})=\frac{n(n-1)}{2}$, $q^{\mathcal{D}}(\overrightarrow{C_n})=n(n-1)$. Then by Corollary \ref{cor23}, we have

\begin{cor}\label{cor25}
Let $\overrightarrow{G}$ be a strongly connected digraph with $n$ vertices. Then
$$\rho(\overrightarrow{G})\leq n-1, \quad q(\overrightarrow{G})\leq 2n-2, \quad \rho^{\mathcal{D}}(\overrightarrow{G})\geq n-1,
\quad q^{\mathcal{D}}(\overrightarrow{G})\geq 2n-2,$$
\noindent  with equality holds if and only if $\overrightarrow{G}\cong \overset{\longleftrightarrow}{K_n}$.
\end{cor}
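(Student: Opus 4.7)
The plan is to realise every strongly connected digraph $\overrightarrow{G}$ on $n$ vertices as a spanning subdigraph of the complete symmetric digraph $\overset{\longleftrightarrow}{K_n}$, and then read off the four bounds directly from Corollary~\ref{cor23} together with the row-sum evaluations for $\overset{\longleftrightarrow}{K_n}$ that were recorded right after Lemma~\ref{lem24}.

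First I would observe that $V(\overrightarrow{G})=V(\overset{\longleftrightarrow}{K_n})$ and every arc of $\overrightarrow{G}$ is an arc of $\overset{\longleftrightarrow}{K_n}$, so $\overrightarrow{G}$ is a spanning subdigraph of $\overset{\longleftrightarrow}{K_n}$. Part~(i) of Corollary~\ref{cor23} then gives $\rho(\overrightarrow{G})\leq\rho(\overset{\longleftrightarrow}{K_n})=n-1$ and $q(\overrightarrow{G})\leq q(\overset{\longleftrightarrow}{K_n})=2(n-1)$. For the two distance-type parameters, both $\overrightarrow{G}$ and $\overset{\longleftrightarrow}{K_n}$ are strongly connected, so part~(iii) of the same corollary applies and yields the reverse inequalities $\rho^{\mathcal{D}}(\overrightarrow{G})\geq n-1$ and $q^{\mathcal{D}}(\overrightarrow{G})\geq 2(n-1)$.

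For the equality characterisation, I would argue that if $\overrightarrow{G}\not\cong\overset{\longleftrightarrow}{K_n}$ then $\overrightarrow{G}$ is a \emph{proper} spanning subdigraph of $\overset{\longleftrightarrow}{K_n}$, so parts~(ii) and~(iv) of Corollary~\ref{cor23} upgrade all four inequalities to strict ones. The converse direction is immediate from the values $\rho(\overset{\longleftrightarrow}{K_n})=\rho^{\mathcal{D}}(\overset{\longleftrightarrow}{K_n})=n-1$ and $q(\overset{\longleftrightarrow}{K_n})=q^{\mathcal{D}}(\overset{\longleftrightarrow}{K_n})=2(n-1)$.

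There is essentially no real obstacle here: the corollary is just a bookkeeping combination of Corollary~\ref{cor23} with the row-sum evaluations provided by Lemma~\ref{lem24}. The only mild check I would still make is that parts~(ii) and~(iv) really do fire for every proper strongly connected spanning subdigraph of $\overset{\longleftrightarrow}{K_n}$: removing an arc strictly decreases $A(\overset{\longleftrightarrow}{K_n})$ and $Q(\overset{\longleftrightarrow}{K_n})$ in at least one entry while strictly increasing $\mathcal{D}(\overset{\longleftrightarrow}{K_n})$ and $\mathcal{Q}(\overset{\longleftrightarrow}{K_n})$ in at least one entry (the endpoints of the removed arc now lie at distance at least $2$), so the strict-inequality hypotheses of Corollary~\ref{cor23} are genuinely satisfied.
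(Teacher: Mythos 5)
Your argument is correct and is exactly the paper's: the paper derives Corollary~\ref{cor25} by computing $\rho$, $q$, $\rho^{\mathcal{D}}$, $q^{\mathcal{D}}$ for $\overset{\longleftrightarrow}{K_n}$ via Lemma~\ref{lem24} and then invoking the monotonicity and strictness statements of Corollary~\ref{cor23} for (proper) spanning subdigraphs. Your extra check that a missing arc forces a strictly smaller entry in $A$, $Q$ and a strictly larger entry in $\mathcal{D}$, $\mathcal{Q}$ is a harmless elaboration of the same reasoning.
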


\begin{cor}\label{cor26}
Let $\overrightarrow{G}$ be a strongly connected digraph with $n$ vertices. Then
$$\rho(\overrightarrow{G})\geq 1, \quad q(\overrightarrow{G})\geq 2, \quad \rho^{\mathcal{D}}(\overrightarrow{G})\leq \frac{n(n-1)}{2},
\quad q^{\mathcal{D}}(\overrightarrow{G})\leq n(n-1),$$
\noindent  with equality holds if and only if $\overrightarrow{G}\cong \overrightarrow{C_n}.$
\end{cor}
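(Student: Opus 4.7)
The plan is to deduce all four inequalities from Lemma \ref{lem24} applied in turn to the four irreducible nonnegative matrices $A(\overrightarrow{G})$, $Q(\overrightarrow{G})$, $\mathcal{D}(\overrightarrow{G})$, and $\mathcal{Q}(\overrightarrow{G})$, by giving appropriate pointwise bounds on their row sums. Strong connectivity guarantees irreducibility of all four matrices, so Lemma \ref{lem24} is applicable with its full equality characterization.

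For the adjacency and signless Laplacian cases, the argument is immediate: since $\overrightarrow{G}$ is strongly connected every vertex has positive out-degree, so the $i$-th row sum of $A(\overrightarrow{G})$ is $d^+_i\geq 1$, and the $i$-th row sum of $Q(\overrightarrow{G})=\mathrm{diag}(\overrightarrow{G})+A(\overrightarrow{G})$ is $2d^+_i\geq 2$. The lower-bound half of Lemma \ref{lem24} (that $\rho(M)\geq \min_i r_i$) then gives $\rho(\overrightarrow{G})\geq 1$ and $q(\overrightarrow{G})\geq 2$.

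The distance bounds require a short combinatorial step. The $i$-th row sums of $\mathcal{D}(\overrightarrow{G})$ and $\mathcal{Q}(\overrightarrow{G})$ equal $Tr_{\overrightarrow{G}}(v_i)$ and $2\,Tr_{\overrightarrow{G}}(v_i)$ respectively, so it suffices to prove $Tr_{\overrightarrow{G}}(v_i)\leq n(n-1)/2$ for every $i$. I would order the $n-1$ other vertices in non-decreasing order of distance from $v_i$ as $d_1\leq d_2\leq \cdots\leq d_{n-1}$ and show that $d_k\leq k$ for each $k$. Indeed, for any $\ell$ not exceeding the eccentricity of $v_i$, a shortest path from $v_i$ to a vertex at distance $\ell$ visits distinct vertices at each distance $1,2,\ldots,\ell$, so at least $\ell$ of the other vertices lie at distance $\leq \ell$; beyond the eccentricity the inequality $d_k\leq k$ is automatic. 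Summing yields $Tr_{\overrightarrow{G}}(v_i)\leq 1+2+\cdots+(n-1)=n(n-1)/2$, whence the upper-bound half of Lemma \ref{lem24} gives $\rho^{\mathcal{D}}(\overrightarrow{G})\leq n(n-1)/2$ and $q^{\mathcal{D}}(\overrightarrow{G})\leq n(n-1)$.

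For the equality cases I would invoke the "all row sums equal" conclusion in Lemma \ref{lem24}. In the adjacency and signless Laplacian cases this forces $d^+_i=1$ for every $i$; starting at any vertex and repeatedly following the unique out-arc produces a closed walk that, by strong connectivity, must visit every vertex exactly once, so $\overrightarrow{G}\cong\overrightarrow{C_n}$. In the distance cases it forces every transmission to equal $n(n-1)/2$, and the equality clause $d_k=k$ in the argument above then shows that the multiset of distances from each vertex is exactly $\{1,2,\ldots,n-1\}$; in particular each vertex has a unique out-neighbor, reducing again to the previous case. The converse directions were already computed in the paragraph preceding Corollary \ref{cor25}. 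The only mildly technical point is the transmission bound $Tr_{\overrightarrow{G}}(v_i)\leq n(n-1)/2$, but this is really just a BFS level-counting argument and presents no real obstacle; note also that one cannot simply quote Corollary \ref{cor23}(iii) with $\overrightarrow{H}=\overrightarrow{C_n}$ because a strongly connected digraph need not contain a spanning directed cycle, which is precisely why the Lemma \ref{lem24} route is needed.
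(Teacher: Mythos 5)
Your proof is correct, and it takes a somewhat different route from the paper's. The paper does not argue all four bounds uniformly: it simply cites Theorem 3.2 of \cite{2013DAM} for $\rho^{\mathcal{D}}(\overrightarrow{G})\leq n(n-1)/2$, leaves $\rho(\overrightarrow{G})\geq 1$ and $q(\overrightarrow{G})\geq 2$ implicit, and proves only the $q^{\mathcal{D}}$ statement, by splitting into cases according to whether $\overrightarrow{G}$ has a Hamiltonian dicycle: if it does, Corollary \ref{cor23} applied to the spanning subdigraph $\overrightarrow{C_n}$ gives the bound and the equality case; if it does not, the paper uses Lemma \ref{lem24} (asserting $\max_i r_i\leq n(n-1)$ without proof) and, in the boundary case $\max_i r_i=n(n-1)$, exhibits a Hamiltonian dipath and a vertex $v_k$ with strictly smaller row sum to rule out equality. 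You instead run everything through Lemma \ref{lem24} alone: you actually prove the transmission bound $Tr_{\overrightarrow{G}}(v)\leq n(n-1)/2$ by the level-counting argument $d_k\leq k$, and in the equality case you use the ``all row sums equal'' clause to force every transmission to be $n(n-1)/2$, hence $d_k=k$ for all $k$, hence out-degree $1$ at every vertex, which together with strong connectivity yields $\overrightarrow{C_n}$ directly --- this handles both the Hamiltonian and non-Hamiltonian situations at once and avoids both the case split and the external citation. Your treatment of the equality cases for $\rho$ and $q$ (constant row sums force all out-degrees equal to $1$) likewise fills in a step the paper glosses over, and your remark about why Corollary \ref{cor23} cannot be quoted directly with $\overrightarrow{H}=\overrightarrow{C_n}$ is exactly the point that makes the paper's case analysis necessary. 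The trade-off is that the paper's argument is shorter on the page because it leans on \cite{2013DAM} and on Corollary \ref{cor23}, while yours is self-contained and uniform across all four matrices; the converse directions in both treatments rest on the row-sum computations for $\overrightarrow{C_n}$ recorded before Corollary \ref{cor25}.
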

\begin{proof}
 In \cite{2013DAM}, Theorem 3.2 show that $\rho^{\mathcal{D}}(\overrightarrow{G})\leq \frac{n(n-1)}{2},$
 and the equality holds if and only if $\overrightarrow{G}\cong \overrightarrow{C_n}.$
Now we only show $q^{\mathcal{D}}(\overrightarrow{G})\leq n(n-1)$ and the equality holds if and only if $\overrightarrow{G}\cong \overrightarrow{C_n}.$

 If $\overrightarrow{G}$ has a Hamiltonian dicycle,
 we have $ q^{\mathcal{D}}(\overrightarrow{G})\leq  q^{\mathcal{D}}(\overrightarrow{C_n})$
and the equality holds if and only if $\overrightarrow{G}\cong \overrightarrow{C_n}$ by Corollary \ref{cor23}.

If $\overrightarrow{G}$ does not contain a Hamiltonian dicycle. Noting that $\max\limits_{1\leq i\leq n}r_i\leq n(n-1)$.
If $\max\limits_{1\leq i\leq n}r_i< n(n-1)$, then $q^{\mathcal{D}}(\overrightarrow{G})\leq \max\limits_{1\leq i\leq n}r_i<n(n-1)= q^{\mathcal{D}}(\overrightarrow{C_n})$ by Lemma \ref{lem24}.

If $\max\limits_{1\leq i\leq n}r_i= n(n-1)$, then $\overrightarrow{G}$ contains a vertex $v_1$ such that $2Tr_{\overrightarrow{G}}(v_1)= n(n-1),$ then $\overrightarrow{G}$ contains a Hamiltonian dipath $P$ initiating at $v_1$. Suppose that $P= v_1\rightarrow v_2\rightarrow \ldots\rightarrow v_n$ is the Hamiltonian dipath initiating at $v_1$. Then there is no arc $(v_i, v_j)\in E(\overrightarrow{G})$ if $j-i\geq 2$ since $Tr_{\overrightarrow{G}}(v_1) = \frac{n(n-1)}{2}$. Since $\overrightarrow{G}$ is strongly connected and does not a Hamiltonian dicycle, there exists a dipath $P'$ from $v_n$ to $v_1$ and thus there exists some vertex, namely, $v_k (k\neq n)$,  is adjacent to $v_1$, that is $(v_k, v_1)\in E(\overrightarrow{G})$. Since $v_k$ is on the Hamiltonian dipath $P$, we have $(v_k, v_{k+1})\in E(\overrightarrow{G}).$ Hence
$$r_k \leq 2(1+1+2+\ldots +n-2)<2(1+2+\ldots +n-1)= 2Tr_{\overrightarrow{G}}(v_1)=n(n-1),$$  it implies that
the row sums of $\mathcal{Q}(\overrightarrow{G})$ are not equal.
Then by Lemma \ref{lem24}, we have
$$q^{\mathcal{D}}(\overrightarrow{G})< q^{\mathcal{D}}(\overrightarrow{C_n}).$$

Combining the above arguments, we complete the proof.
\end{proof}

\section{The spectrum of a matrix}
\hskip.6cm Let $I_p$ be the $p\times p$ identity matrix and $J_{p,q}$ be the $p\times q$ matrix in which every entry is $1$, or simply $J_p$ if $p=q$. Let $M$ be a matrix of order $n$, $\sigma(M)$ be the spectrum of the matrix $M$, $P_M(\lambda)=det(xI_n-M)$ be the characteristic polynomial of matrix $M$.

\begin{defn}\label{defn31}{\rm(\cite{2014a})}
Let $M$ be a real matrix of order $n$ described in the following block form

\begin{equation}\label{eq31}
M = \left(\begin{array}{ccc}
M_{11} & \cdots & M_{1t}\\
\vdots &  \ddots      &\vdots \\
 M_{t1}& \cdots & M_{tt}\\
\end{array}\right),
\end{equation}

\noindent where the diagonal blocks $M_{ii}$ are $n_i\times n_i$ matrices for any $i\in\{1,2,\ldots, t\}$ and $n=n_1+\ldots+n_t$.
For any $i,j\in\{1,2,\ldots, t\}$, let $b_{ij}$ denote the average row sum of $M_{ij}$, i.e. $b_{ij}$ is the sum of all entries in $M_{ij}$ divided by the number of rows. Then $B(M) = (b_{ij})$ (simply by $B$) is called the quotient matrix of $M$. If in addition for each pair $i, j$, $M_{ij}$ has constant row sum, then $B(M)$ is called the equitable quotient matrix of $M$.
\end{defn}

\begin{lem}\label{lem32}%{\rm(\cite{2011}, Chapter 2)}
Let $M=(m_{ij})_{n\times n}$ be defined as (\ref{eq31}),  and for any $i,j \in\{ 1,2\ldots,t\}$, the row sum of each block $M_{ij}$ be constant.
Let $B=B(M)=(b_{ij})$ be the equitable quotient matrix of $M$, and $\lambda$ be an  eigenvalue of $B$. Then $\lambda$  is also  an eigenvalue of $M$.
\end{lem}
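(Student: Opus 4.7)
The plan is to lift an eigenvector of the small matrix $B$ to an eigenvector of the large matrix $M$ by inflating each coordinate into a constant block. This is the standard equitable-partition trick, and the constant-row-sum hypothesis is exactly what makes it work.

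First I would take a nonzero vector $v=(v_1,\ldots,v_t)^{T}$ with $Bv=\lambda v$, i.e.\ $\sum_{j=1}^{t}b_{ij}v_j=\lambda v_i$ for every $i$. Then I would define a vector $x\in\mathbb{R}^{n}$ block-by-block, setting the $i$-th block (of length $n_i$) equal to $v_i\,\mathbf{1}_{n_i}$, where $\mathbf{1}_{n_i}$ denotes the all-ones column vector. Since $v\neq 0$, at least one $v_i\neq 0$, so $x\neq 0$.

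Next I would verify $Mx=\lambda x$ block by block. The $i$-th block of $Mx$ is
\[
\sum_{j=1}^{t} M_{ij}(v_j\mathbf{1}_{n_j}) \;=\; \sum_{j=1}^{t} v_j\,(M_{ij}\mathbf{1}_{n_j}).
\]
Here the constant-row-sum hypothesis enters: $M_{ij}\mathbf{1}_{n_j}=b_{ij}\mathbf{1}_{n_i}$. Hence the $i$-th block equals $\bigl(\sum_{j}b_{ij}v_j\bigr)\mathbf{1}_{n_i}=\lambda v_i\mathbf{1}_{n_i}$, which is precisely the $i$-th block of $\lambda x$. Thus $Mx=\lambda x$ with $x\neq 0$, so $\lambda\in\sigma(M)$.

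There is no real obstacle here; the only thing to be careful about is bookkeeping of the block indices and confirming that the inflated vector $x$ is nonzero, both of which follow immediately from the structural hypothesis on $M$ and the nontriviality of $v$.
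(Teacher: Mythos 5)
Your proof is correct and is essentially the same as the paper's: both lift an eigenvector of $B$ to an eigenvector of $M$ by repeating each coordinate across its block and use the constant row sums of the blocks $M_{ij}$ to verify the eigenvalue equation. Your write-up in block-vector notation (with the explicit check that the lifted vector is nonzero) is just a cleaner presentation of the same argument.
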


\begin{proof}
Let $By = \lambda y$ where $y = (y_1,y_2,\ldots,y_t)^T$.
Define $Y = (y_{11},\ldots,y_{1,n_1},\ldots,y_{t1},\ldots,y_{t,n_t})^T$ by the relation $y_{i1} = y_{i2} = \ldots = y_{i,n_i}=y_i$  for each $i\in\{1,2,\ldots,t\}$. For any $i\in\{1,2,\ldots,t\}$ and $k\in\{1,2,\ldots, n_i\}$,
 let $M_i(k)$ be the $k$-th row of the $i$-th row blocks $(M_{i1}, \ldots, M_{it})$,
that is, $M_i(k)$ is the $l$-th row of $M$ where $l=n_1+\ldots+n_{i-1}+k$,
then by $M_i(k)Y=(MY)_l=\sum\limits_{j=1}^{n_1}m_{lj}y_1+\sum\limits_{j=n_1+1}^{n_1+n_2}m_{lj}y_2+\ldots
+\sum\limits_{j=n_1+\ldots+n_{t-1}+1}^{n_1+\ldots+n_t}m_{lj}y_t$ and the definition of $b_{ij}$ for each $i,j\in\{1,2,\ldots t\}$,
%$b_{i1}=\sum\limits_{j=1}^{n_1}m_{lj}},$
 we have
$$\lambda Y_l=\lambda y_{ik}=\lambda y_i=(By)_i= \sum \limits_{j=1 }^{t}b_{ij}y_j=M_i(k)Y=(MY)_{l},$$
%is equal to the $k$-th row of the $i$-th row blocks $(M_{i1}, \ldots, M_{it})$.
%$(Mx)_{ik} = b_{ii}y_i + \sum \limits_{j=1,j\not = i}^{t}b_{i,j}y_j = (By)_i = \lambda y_i = \lambda x_{ik}$,
%where $(Mx)_{ik}$ denoted the $k$-th row of $(M_{i1}, M_{i2},\ldots, M_{it})$, $1\leq k\leq n_i$,
thus we have $MY = \lambda Y,$  and we complete the proof.
\end{proof}

\begin{exam}\label{exam33}
Let $G=(V,E)$ be the Petersen graph as Figure 1. Let $\{V_1,V_2\}$ be a partition of $V=\{1,2,\ldots, 10\}$,
where $V_1=\{1,2,3,4,5\}$ and $V_2=\{6,7,8,9,10\}$.
 Then the equitable quotient matrices $B(A), B(L), B(Q), B(\mathcal{D}),
B(\mathcal{L}), B(\mathcal{Q})$ corresponding to the adjacency matrix $A(G)$,
the Laplacian matrix $L(G)$, the signless Laplacian matrix $Q(G)$, the distance matrix $\mathcal{D}(G)$,
the distance  Laplacian matrix $\mathcal{L}(G)$, the distance signless Laplacian matrix $\mathcal{Q}(G)$, respectively, are as follows:

\vspace{15mm}
\setlength{\unitlength} {4mm}
\begin{center}
\begin{picture}(15,10)

\put(4,4){\circle* {0.3}}   \put(10,4){\circle* {0.3}}    \put(2,8){\circle* {0.3}}       \put(12,8){\circle* {0.3}}
\put(7,12){\circle* {0.3}}   \put(5,6){\circle* {0.3}}     \put(9,6){\circle* {0.3}}       \put(4,8){\circle* {0.3}}
\put(10,8){\circle* {0.3}}   \put(7,10){\circle* {0.3}}

\put(4,4){\line(1,0){6}}    \put(4,4){\line(-1,2){2}}    \put(10,4){\line(1,2){2}}    \put(2,8){\line(5,4){5}}   \put(12,8){\line(-5,4){5}}
\put(2,8){\line(1,0){2}}    \put(10,8){\line(1,0){2}}    \put(7,10){\line(0,1){2}}     \put(4,4){\line(1,2){1}}   \put(10,4){\line(-1,2){1}}
\put(5,6){\line(1,2){2}}    \put(5,6){\line(5,2){5}}     \put(4,8){\line(1,0){6}}     \put(9,6){\line(-5,2){5}}   \put(9,6){\line(-1,2){2}}

\put(7,12.5){1}     \put(1,8){2}      \put(4,3){3}       \put(10,3){4}    \put(12.5,8){5}      %\put(15,9){$V_1=\{1,2,3,4,5\}$}
\put(7.5,10){6}     \put(3.5,8.2){7}    \put(4,6){8}    \put(9.5,6){9}    \put(9.5,8.2){10}    %\put(15,7){$V_2=\{6,7,8,9,10\}$}

\put(1,1.5){Figure $1$. The Petersen graph }
\end{picture}
\end{center}
\vskip-0.5cm

 $$B(A) = \left(\begin{array}{lcr}
2 & 1\\
1 & 2\\
\end{array}\right),  \qquad
 B(L) = \left(\begin{array}{lcr}
1 & -1\\
-1 & 1\\
\end{array}\right), \qquad
B(Q) = \left(\begin{array}{lcr}
5 & 1\\
1 & 5\\
\end{array}\right),$$

$$B(\mathcal{D}) = \left(\begin{array}{lcr}
6 & 9\\
9 & 6\\
\end{array}\right),  \qquad
 B(\mathcal{L})= \left(\begin{array}{lcr}
9 & -9\\
-9 & 9\\
\end{array}\right), \qquad
B(\mathcal{Q}) = \left(\begin{array}{lcr}
21 & 9\\
9 & 21\\
\end{array}\right).
$$

\noindent Then
$$\rho(B(A))=3, \rho(B(L))=2, \rho(B(Q))=6, \rho(B(\mathcal{D}))=15, \rho(B(\mathcal{L}))=18, \rho(B(\mathcal{Q}))=30, $$
but by directly calculating, we have  $$\rho(G) = 3,  \mu(G)=5, q(G) = 6, \rho^{\mathcal{D}}(G) = 15, \mu^{\mathcal{D}}(G))=18,
q^{\mathcal{D}}(G) = 30.$$
We see that the largest eigenvalue of the equitable quotient matrix $B(M)$ is the   largest eigenvalue of $M$ when $M$ is
the adjacency matrix $A(G)$, the signless Laplacian matrix $Q(G)$, the distance matrix $\mathcal{D}(G)$,
the distance  Laplacian matrix $\mathcal{L}(G)$ or
the distance signless Laplacian matrix $\mathcal{Q}(G)$  of a graph $G$,
and the result is totally different when $M$ is the Laplacian matrix $L(G)$   of a graph $G$.
\end{exam}

\begin{lem}\label{lem34}
Let $M$ be defined as (\ref{eq31}), and for any $i,j \in\{ 1,2\ldots,t\}$,
 $M_{ii} = l_iJ_{n_i} + p_iI_{n_i},$ $M_{ij} = s_{ij}J_{n_i,n_j}$ for $i\not= j$, where $l_i, p_i, s_{ij}$ are real numbers,
  $B=B(M)$ be the  quotient matrix of $M$.  Then
\begin{equation}\label{eq32}
\sigma(M)=\sigma(B)\cup \{p_i^{[n_i-1]} \mid i = 1,2\ldots,t\},
\end{equation}
 where $\lambda^{[t]}$ means that $\lambda$ is an eigenvalue with multiplicity $t$.
\end{lem}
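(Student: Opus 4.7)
My strategy is to decompose $\mathbb{R}^n$ into two complementary $M$-invariant subspaces---one on which $M$ acts like $B$ and another on which each direct summand is a $p_i$-eigenspace of the right dimension---and then multiply characteristic polynomials.

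First, because $M_{ii}=l_iJ_{n_i}+p_iI_{n_i}$ and $M_{ij}=s_{ij}J_{n_i,n_j}$ for $i\neq j$, every block $M_{ij}$ has constant row sum. By Lemma~\ref{lem32} this already yields the eigenvalues of $B$ as eigenvalues of $M$, and the proof of that lemma exhibits eigenvectors that are constant on each block. Let $V\subset\mathbb{R}^n$ be the $t$-dimensional subspace of all vectors constant on each block; then $V$ is $M$-invariant and, after the obvious identification $V\cong\mathbb{R}^t$, the restriction $M|_V$ coincides with $B$.

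Next, for each $i\in\{1,\ldots,t\}$ let $W_i\subset\mathbb{R}^n$ be the subspace of vectors supported on the coordinates of block $i$ whose entries there sum to zero, so $\dim W_i=n_i-1$. I would verify $MY=p_iY$ for $Y\in W_i$ by a short two-case computation: denoting by $y$ the restriction of $Y$ to block $i$, rows outside block $i$ contribute $s_{ji}\mathbf{1}^Ty=0$, while the $k$-th row inside block $i$ contributes $l_i(\mathbf{1}^Ty)+p_iy_k=p_iy_k$. Hence $W_i$ lies in the $p_i$-eigenspace of $M$; setting $W=W_1\oplus\cdots\oplus W_t$ gives an $M$-invariant subspace of dimension $n-t$ on which the characteristic polynomial of $M$ is $\prod_{i=1}^t(\lambda-p_i)^{n_i-1}$.

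Finally, $V\cap W=\{0\}$ because a vector constant on a block whose entries sum to zero is the zero vector, and $\dim V+\dim W=n$, so $\mathbb{R}^n=V\oplus W$. Choosing a basis adapted to this decomposition yields
\[
P_M(\lambda)=P_{M|_V}(\lambda)\cdot P_{M|_W}(\lambda)=P_B(\lambda)\prod_{i=1}^t(\lambda-p_i)^{n_i-1},
\]
which is precisely the multiset identity (\ref{eq32}). The only step that requires any care is confirming $V\oplus W=\mathbb{R}^n$ and that $M|_V$ genuinely equals $B$ under the natural identification, and neither is a real obstacle.
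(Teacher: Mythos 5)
Your proof is correct, and it reaches the conclusion by a cleaner route than the paper's. The paper works with the same two families of vectors you use, but it treats them as individual eigenvectors: it invokes Lemma~\ref{lem32} to get $\sigma(B)\subseteq\sigma(M)$ via block-constant eigenvectors, extends each zero-sum vector on block $i$ by zeros to get $n_i-1$ eigenvectors for $p_i$, and then finishes by a counting argument, dealing separately with the possible coincidence $p_i\in\sigma(B)$ through the observation that block-constant vectors are orthogonal to vectors whose block-$i$ entries sum to zero. You instead bundle these vectors into the two complementary $M$-invariant subspaces $V$ (block-constant, on which $M$ restricts to $B$ in the basis of block indicators) and $W=\bigoplus_i W_i$ (zero-sum within each block, on which $M$ acts as $p_i$ on $W_i$), and conclude from $\mathbb{R}^n=V\oplus W$ that $P_M(\lambda)=P_B(\lambda)\prod_{i=1}^t(\lambda-p_i)^{n_i-1}$. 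This buys you two things: the multiset identity (\ref{eq32}) with \emph{algebraic} multiplicities comes out automatically, with no need to worry about whether $M$ or $B$ is diagonalizable or whether eigenvector counts exhaust the spectrum, and the awkward special case $p_i\in\sigma(B)$ that the paper must argue by orthogonality simply disappears. The paper's version, in exchange, stays entirely at the level of explicit eigenvectors and reuses Lemma~\ref{lem32} verbatim, which fits the expository flow of the section; but as a standalone argument yours is the more robust one.
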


\begin{proof}
It is obvious that for any $i,j \in\{ 1,2\ldots,t\}$, $M_{ij}$ has constant row sum, so $B$ is the equitable quotient matrix of $M$.
Then $\sigma(B)\subseteq \sigma(M)$ by Lemma \ref{lem32}.

On the other hand, we note that $\sigma(l_iJ_{n_i} + p_iI_{n_i}) = \{l_in_i + p_i, p_i^{[n_i-1]}\}$, where $l_iJ_{n_i} + p_iI_{n_i}$ has the all-one vector $J_{n_i,1}$ such that $(l_iJ_{n_i} + p_iI_{n_i})J_{n_i,1} = (l_in_i + p_i)J_{n_i,1}$,
and its all other eigenvectors corresponding to eigenvalue $p_i$ are orthogonal to $J_{n_i,1}$.

Let $x$ be an any eigenvector such that $(l_iJ_{n_i} + p_iI_{n_i})x = p_ix$, then $x^TJ_{n_i,1} = 0$, and
$(\mathbf{0}_{1,n_1},\ldots,x^T,\ldots,\mathbf{0}_{1,n_t})^T$ is an  eigenvector of $M$ corresponding to eigenvalue $p_i$.  Therefore the $p_i$ is  an eigenvalue of $M$ with multiplicities at least $n_i-1$.
And thus we obtain at least $\sum_{i=1}^{t}(n_i - 1) = n-t$ eigenvalues of $M$, that is,
$\{p_1^{[n_1-1]}, \ldots, p_t^{[n_t-1]}\}\subseteq \sigma (M)$.

Therefore $\sigma(B)\cup \{p_1^{[n_1-1]}, \ldots, p_t^{[n_t-1]}\}\subseteq \sigma (M)$ by  Lemma \ref{lem32},
and $|\sigma (M)|\leq |\sigma(B)|+|\{p_1^{[n_1-1]}, \ldots, p_t^{[n_t-1]}\}|=n$
by $|\sigma(B)|=t$ and $|\{p_1^{[n_1-1]}, \ldots, p_t^{[n_t-1]}\}|=n-t$.

If there exists some $p_i$ such that $p_i\in \sigma(B)$ where $i\in\{1,2,\ldots, t\}$, % the equitable quotient matrix $B$ also has eigenvalues $p_i$,
by the proof of Lemma \ref{lem32}, we have $My=p_iy$ with $y=(y_{11},\ldots,y_{1,n_1},\ldots,y_{t1},\ldots,y_{t,n_t})^T$,
where $y_{i1} = y_{i2} = \ldots = y_{i,n_i}=y_i$ for each $i\in\{1,2,\ldots,t\}$. Then we have $(\mathbf{0}_{1,n_1},\ldots,x^T,\ldots,\mathbf{0}_{1,n_t})y$$=y_i(x^TJ_{n_i,1})=0$,
it implies that  the eigenvectors corresponding to the eigenvalue $p_i$ of $B$ and the eigenvalue $p_i$ in
$\{p_1^{[n_1-1]}, \ldots, p_t^{[n_t-1]}\}$ are all orthogonal, then
 $|\sigma (M)|= |\sigma(B)|+|\{p_1^{[n_1-1]}, \ldots, p_t^{[n_t-1]}\}|=n$ and thus (\ref{eq32}) holds.
%%Therefore $\sigma(B)\cup \{p_1^{[n_1-1]}, \ldots, p_t^{[n_t-1]}\}\subseteq \sigma (M)$,
%and thus (\ref{eq32}) holds by $|\sigma(B)|=t$, $|\{p_1^{[n_1-1]}, \ldots, p_t^{[n_t-1]}\}|=n-t$ and $|\sigma (M)|=n$.
\end{proof}

%\begin{cor}\label{cor34}{\rm(\cite{1976})}
%Any divisor of a graph $G$ has the largest eigenvalue of $G$ as an eigenvalue.
%\end{cor}

\begin{exam}\label{exam35}
Let $G = K_{n_1,n_2,\ldots,n_t}$ be a complete t-partite graph with $n$ vertices for $ t \geq 2$,
% (if $t=n$, $G$ is the complete graph $K_n$ ),
the adjacency matrix $A=A(G)$, the Laplacian matrix $L=L(G)$, the signless Laplacian matrix $Q=Q(G)$,
the distance matrix $\mathcal{D}=\mathcal{D}(G),$  the distance Laplacian matrix $\mathcal{L}=\mathcal{L}(G)$
and the distance signless Laplacian matrix $\mathcal{Q}(G)$  of $G=K_{n_1,n_2,\ldots,n_t}$ are as follows:

(1). $A = M$, where $l_i = p_i = 0, s_{ij} = 1$ for $i\not= j$ where  $i,j\in\{ 1,2,\ldots,t\}.$

(2). $L =M$, where $l_i = 0, p_i = n-n_i, s_{ij} = -1$ for $i\not= j$ where  $i,j\in\{ 1,2,\ldots,t\}.$

(3). $Q = M$, where $l_i = 0, p_i = n-n_i, s_{ij} = 1$ for $i\not= j$ where  $i,j\in\{ 1,2,\ldots,t\}.$

(4). $\mathcal{D} = M$, where $l_i = 2, p_i = -2, s_{ij} = 1$ for $i\not= j$ where  $i,j\in\{ 1,2,\ldots,t\}.$

(5). $\mathcal{L} = M$, where $l_i = -2, p_i = n + n_i, s_{ij} = -1$ for $i\not= j$ where  $i,j\in\{ 1,2,\ldots,t\}.$

(6). $\mathcal{Q} = M$, where $l_i = 2, p_i = n + n_i - 4, s_{ij} = 1$ for $i\not= j$ where  $i,j\in\{ 1,2,\ldots,t\}.$

It is obvious that for any $i,j \in\{ 1,2\ldots,t\}$, $M_{ij}$ has constant row sum. % so $B$ is the equitable quotient matrix of $M$.\noindent
Then the corresponding equitable quotient matrices are as follows:

 $$B(A) = \left(\begin{array}{cccc}
0    & n_2   &\cdots   & n_t\\
n_1  & 0     & \cdots  & n_t\\
\vdots & \vdots &\ddots & \vdots \\
n_1 &    n_2 & \cdots &0 \\
  \end{array}\right), \qquad
  B(L) = \left(\begin{array}{cccc}
n-n_1 & -n_2 &\cdots  &-n_t\\
-n_1 & n-n_2 & \cdots  &-n_t\\
\vdots & \vdots &\ddots & \vdots \\
-n_1 & -n_2 & \cdots & n-n_t \\
  \end{array}\right),$$

  $$B(Q) = \left(\begin{array}{cccc}
n-n_1 & n_2 &\cdots  &n_t\\
n_1 & n-n_2 & \cdots  &n_t\\
\vdots & \vdots & \ddots &\vdots \\
n_1 & n_2 & \cdots &n-n_t \\
  \end{array}\right),
  \quad
B(\mathcal{D})= \left(\begin{array}{cccc}
2n_1-2 & n_2 &\cdots  &n_t\\
n_1 & 2n_2-2 & \cdots  &n_t\\
\vdots & \vdots & \ddots &\vdots \\
n_1 & n_2 & \cdots &2n_t-2\\
  \end{array}\right),$$

$$ B(\mathcal{L}) = \left(\begin{array}{cccc}
n-n_1 & -n_2 &\cdots  &-n_t\\
-n_1 & n-n_2 & \cdots & -n_t\\
\vdots & \vdots & \ddots &\vdots \\
-n_1 & -n_2 & \cdots &n-n_t \\
  \end{array}\right),$$

 $$ B(\mathcal{Q})= \left(\begin{array}{cccc}
n+3n_1-4 & n_2 &\cdots & n_t\\
n_1 & n+3n_2-4 & \cdots & n_t\\
\vdots & \vdots &\ddots & \vdots \\
n_1 & n_2 & \cdots &n+3n_t-4 \\
  \end{array}\right).$$

By Lemma \ref{lem34}, we have

(1). $ P_A(\lambda) =\lambda^{n-t} P_{B(A)}(\lambda) = \lambda^{n-t}[\prod\limits_{i=1}^t(\lambda + n_i) - \sum\limits_{i=1}^tn_i \prod \limits_{j=1,j \neq i}^t(\lambda + n_j)].$

(2). $ P_L(\lambda) =\prod\limits_{i=1}^t(\lambda - n + n_i)^{n_i-1} P_{B(L)}(\lambda) =\lambda(\lambda-n)^{t-1} \prod\limits_{i=1}^t(\lambda - n + n_i)^{n_i-1}.$

(3). $ P_Q(\lambda) = \prod\limits_{i=1}^t(\lambda - n + n_i)^{n_i-1}P_{B(Q)}(\lambda)$

        \hskip2.0cm           $= \prod\limits_{i=1}^t(\lambda - n + n_i)^{n_i-1}[\prod\limits_{i=1}^t(\lambda - n + 2n_i) - \sum\limits_{i=1}^tn_i \prod \limits_{j=1,j \neq i}^t(\lambda - n + 2n_j)].$

(4). $  P_{\mathcal{D}}(\lambda) = (\lambda + 2)^{n-t}P_{B(\mathcal{D})}(\lambda)$

\hskip2.0cm   $= (\lambda + 2)^{n-t}[\prod\limits_{i=1}^t(\lambda - n_i + 2) - \sum\limits_{i=1}^tn_i \prod \limits_{j=1,j \neq i}^t(\lambda - n_j + 2)]. \hskip.4cm {\rm(\cite{2013LAA2})}$

(5). $P_{\mathcal{L}}(\lambda) =\prod\limits_{i=1}^t(\lambda - n - n_i)^{n_i-1} P_{B(\mathcal{L})}(\lambda) =\lambda(\lambda-n)^{t-1} \prod\limits_{i=1}^t(\lambda - n - n_i)^{n_i-1}.$

 (6). $P_{\mathcal{Q}}(\lambda) = \prod\limits_{i=1}^t(\lambda - n - n_i + 4)^{n_i-1}P_{B(\mathcal{Q})}(\lambda)$

 \hskip2.0cm    $= \prod\limits_{i=1}^t(\lambda - n - n_i + 4)^{n_i-1}[\prod\limits_{i=1}^t(\lambda - n - 2n_i + 4) - \sum\limits_{i=1}^tn_i \prod \limits_{j=1,j \neq i}^t(\lambda - n - 2n_j + 4)].$

 It is obvious that  we obtain the spectrums of $L$ and $\mathcal{L}$ immediately.
In fact,  $\sigma(L) =\{0, n^{[t-1]}, (n-n_i)^{[n_i-1]}, i\in\{1, 2, \ldots, t\}\}$,
and $\sigma(\mathcal{L}) = \{0, n^{[t-1]}, (n+n_i)^{[n_i-1]}, i\in\{1, 2, \ldots, t\}\}.$
\end{exam}
\vskip.3cm

A block of $G$ is a maximal connected subgraph of $G$ that has no cut-vertex. A graph $G$ is a clique tree if each block of $G$ is a clique.
We call $\mathbb{K}_{u,n_2,\ldots,n_{k+1}}$ is a clique star if we replace each edge of the star $K_{1,k}$ by a clique $K_{n_i}$ such that $V(K_{n_i})\cap V(K_{n_j}) = u$ for $i \neq j$ and $i,j\in\{ 2,\ldots,k+1\}.$

\begin{exam}\label{exam36}
 Let $G=\mathbb{K}_{u,n_2,\ldots,n_{k+1}}$, where $n_1=|\{u\}| = 1$, $n_i \geq 2$ for any $i\in\{2,\ldots,k+1\}$ and
 $n = n_1 + n_2 + n_3 + \ldots + n_{k+1} - k$. Then
 the adjacency matrix $A=A(G)$, the Laplacian matrix $L=L(G)$, the signless Laplacian matrix $Q=Q(G)$,
the distance matrix $\mathcal{D}=\mathcal{D}(G),$  the distance Laplacian matrix $\mathcal{L}=\mathcal{L}(G)$
and the distance signless Laplacian matrix $\mathcal{Q}(G)$  of
 $G=\mathbb{K}_{u,n_2,\ldots,n_{k+1}}$ are as follows.

(1). $A=M$, where $l_1 = p_1 = 0$ and  $l_i = 1, p_i = -1$ for $i\not=1$,  $s_{ij} = 1$ for $i = 1 \mbox{or } j = 1$,
and $s_{ij} = 0$ for any $i,j\in\{2,\ldots,k+1\}$ and $i \neq j$.

(2). $L = M$, where $l_1 = n-1, p_1 = 0$ and $l_i = -1, p_i = n_i$ for $i\not=1$,
$s_{ij} = -1$ for $i = 1 \mbox{or } j = 1$,
and $s_{ij} = 0$ for any $i,j\in\{2,\ldots,k+1\}$ and $i \neq j$.

(3). $Q = M$, where $l_1 = n-1, p_1 = 0$ and $l_i = 1, p_i = n_i-2$ for $i\not=1$,
$s_{ij} = 1$ for $i = 1 \mbox{or } j = 1$,
and $s_{ij} = 0$ for any $i,j\in\{2,\ldots,k+1\}$ and $i \neq j$.

(4). $\mathcal{D} = M$, where $l_1 = 0, p_1 = 0$ and $l_i = 1, p_i = -1$ for $i\not=1$,
$s_{ij} = 1$ for $i = 1 \mbox{or } j = 1$,
and $s_{ij} = 2$ for any $i,j\in\{2,\ldots,k+1\}$ and $i \neq j$.

(5) $\mathcal{L} = M$, where $l_1 = n-1, p_1 = 0$ and $l_i = -1, p_i = 2n-n_i$ for $i\not=1$,
$s_{ij} = -1$ for $i = 1 \mbox{or } j = 1$,
and $s_{ij} = -2$ for any $i,j\in\{2,\ldots,k+1\}$ and $i \neq j$.

(6). $\mathcal{Q} = M$, where $l_1 = n-1, p_1 = 0$ and $l_i = 1, p_i = 2n-n_i-2$ for $i\not=1$,
$s_{ij} = 1$ for $i = 1 \mbox{or } j = 1$,
and $s_{ij} = 2$ for any $i,j\in\{2,\ldots,k+1\}$ and $i \neq j$.

 It is obvious that for any $i,j\in\{2,\ldots,k+1\}$, $M_{ij}$ has constant row sum.
Then the corresponding equitable quotient matrices are as follows:

$$B(A) = \left(\begin{array}{cccc}
0 & n_1-1 &\cdots  &n_k-1\\
1 & n_1-2 & \cdots & 0\\
\vdots & \vdots & \ddots &\vdots \\
1 & 0 & \cdots &n_k-2 \\
  \end{array}\right), \qquad
B(L) = \left(\begin{array}{cccc}
n-1 & 1-n_1 &\cdots & 1-n_k\\
-1 & 1 & \cdots & 0\\
\vdots & \vdots & \vdots\ddots &\vdots \\
-1 & 0 & \cdots &1 \\
  \end{array}\right),$$

 $$B(Q) = \left(\begin{array}{cccc}
n-1 & n_1-1 &\cdots & n_k-1\\
1 & 2n_1-3 & \cdots & 0\\
\vdots & \vdots & \ddots & \vdots \\
1 & 0 & \cdots &2n_k-3 \\
  \end{array}\right), \qquad
B(\mathcal{D}) = \left(\begin{array}{cccc}
0 & n_1-1 &\cdots  &n_k-1\\
1 & n_1-2 & \cdots & 2£¨n_k-1£©\\
\vdots & \vdots & \ddots & \vdots \\
1 & 2£¨n_1-1£© & \cdots &n_k-2\\
  \end{array}\right), $$

$$B(\mathcal{L}) = \left(\begin{array}{cccc}
n-1 & 1-n_1 &\cdots  &1-n_k\\
-1 & 2n-2n_1+1 & \cdots  &-2(n_k-1)\\
\vdots & \vdots & \ddots &\vdots \\
-1 & -2(n_1-1) & \cdots &2n-2n_k+1 \\
  \end{array}\right),$$

$$B(\mathcal{Q}) = \left(\begin{array}{cccc}
n-1 & n_1-1 &\cdots & n_k-1\\
1 & 2n-3 & \cdots &2£¨n_k-1£©\\
\vdots & \vdots & \ddots &\vdots \\
1 & 2(n_1-1) & \cdots & 2n-3 \\
  \end{array}\right).$$

By Lemma \ref{lem34}, we have

(1). $P_A(\lambda) = (\lambda+1)^{n-k-1}P_{B(A)}(\lambda)$

\hskip2cm $= (\lambda+1)^{n-k-1}[\lambda\prod\limits_{i=2}^{k+1}(\lambda - n_i + 2) - \sum\limits_{i=2}^{k+1}(n_i-1) \prod \limits_{j=2,j \neq i}^{k+1}(\lambda - n_j +2)].$

(2). $P_L(\lambda) =(\lambda-n_i)^{n_i-2}P_{B(L)}(\lambda)= \lambda(\lambda-n)(\lambda-1)^{k-1}(\lambda-n_i)^{n_i-2}.$

(3). $P_Q(\lambda)=\prod\limits_{i=2}^{k+1}(\lambda - n_i + 2)^{n_i-2}P_{B(Q)}(\lambda)$

\hskip2cm $= \prod\limits_{i=2}^{k+1}(\lambda - n_i + 2)^{n_i-2}[\lambda\prod\limits_{i=2}^{k+1}(\lambda - 2n_i + 3) - \sum\limits_{i=2}^{k+1}(n_i-1) \prod \limits_{j=2,j \neq i}^{k+1}(\lambda - 2n_j +3)].$

(4). $ P_{\mathcal{D}}(\lambda) =(\lambda + 1)^{n-k-1}P_{B(\mathcal{D})}(\lambda)$

\hskip2cm $= (\lambda + 1)^{n-k-1}[\lambda\prod\limits_{i=2}^{k+1}(\lambda + n_i) - (2\lambda+1)\sum\limits_{i=2}^{k+1}(n_i-1)\prod \limits_{j=2,j \neq i}^{k+1}(\lambda + n_j)].$

(5). $P_{\mathcal{L}}(\lambda) =(\lambda -2n+n_i)^{n_i-2}P_{B(\mathcal{L})}(\lambda)= \lambda(\lambda-n)(\lambda-2n+1)^{k-1}(\lambda-2n+n_i)^{n_i-2}.$

(6). $P_{\mathcal{Q}}(\lambda) =\prod\limits_{i=2}^{k+1}(\lambda - 2n + n_i +2)^{n_i-2}P_{B(\mathcal{Q})}(\lambda)$

\hskip2cm  $= \prod\limits_{i=2}^{k+1}(\lambda - 2n + n_i +2)^{n_i-2}[(\lambda-n+1)\prod\limits_{i=2}^{k+1}(\lambda - 2n + 2n_i + 1)$

\hskip2cm $-(2\lambda-2n+3)\sum\limits_{i=2}^{k+1}(n_i-1) \prod \limits_{j=2,j \neq i}^{k+1}(\lambda - 2n + 2n_j + 1)].$

It is obvious that we can obtain the spectrum of $L$ and $\mathcal{L}$ immediately.  In fact,
 $\sigma(L) =\{0, n, 1^{[k-1]}, n_i^{[n_i-2]}, i\in\{2, 3, \ldots, k+1\}\}$ and
  $\sigma(\mathcal{L}) = \{0, n, (2n-1)^{[k-1]},(2n-n_i)^{[n_i-2]}, i\in \{2, 3, \ldots, k+1\}\}.$
\end{exam}

By Lemma \ref{lem32}, Example \ref{exam33} and Examples \ref{exam35}--\ref{exam36},  we  proposed the following conjecture for further research.

\begin{con}\label{con31}
Let $M$ be a nonnegative matrix, $B(M)$ be the equitable quotient matrix of $M$. Then
the largest eigenvalue of  $B(M)$ is the largest eigenvalue of  $M$.
\end{con}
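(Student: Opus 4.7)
My plan is to prove $\rho(B(M)) = \rho(M)$ in two stages: first for irreducible $M$ via Perron--Frobenius applied to both $M$ and $B := B(M)$, then for general nonnegative $M$ by a perturbation argument. Lemma \ref{lem32} already gives $\sigma(B) \subseteq \sigma(M)$, and in particular $\rho(B) \leq \rho(M)$; the content of the conjecture is therefore the reverse inequality $\rho(M) \leq \rho(B)$.

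Suppose first that $M$ is irreducible. Since $M$ is nonnegative and the entries of $B$ are averages of nonnegative entries of $M$, $B$ is nonnegative. I would first verify that $B$ is moreover irreducible: for any pair of distinct blocks $V_i, V_j$, strong connectivity of the digraph of $M$ produces a directed path with positive entries from $V_i$ to $V_j$, and this path projects to a walk from $i$ to $j$ in the digraph of $B$, using the key observation that any positive entry of $M_{ij}$ (with $i \neq j$) forces $b_{ij} > 0$, because the common row sum of $M_{ij}$ is at least that entry. Let $y$ be the Perron eigenvector of the irreducible nonnegative matrix $B$, so $y$ is strictly positive and $By = \rho(B) y$. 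The blockwise extension $Y$ defined by $Y_{ik} := y_i$ is then strictly positive, and by the computation used in the proof of Lemma \ref{lem32} it satisfies $MY = \rho(B) Y$. By the Perron--Frobenius theorem applied to the irreducible $M$, the only eigenvalue of $M$ admitting a strictly positive eigenvector is $\rho(M)$; hence $\rho(B) = \rho(M)$.

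For general (possibly reducible) nonnegative $M$, I would perturb. Set $M_\epsilon := M + \epsilon J_n$ for $\epsilon > 0$; then $M_\epsilon$ has all entries positive, hence is irreducible, and the same partition remains equitable because each block $J_{n_i, n_j}$ of $J_n$ has constant row sum $n_j$. Moreover $B(M_\epsilon) = B + \epsilon B(J_n)$, where $B(J_n)$ has entry $n_j$ in position $(i,j)$, so $B(M_\epsilon)$ is itself strictly positive and irreducible. Applying the irreducible case to $M_\epsilon$ gives $\rho(M_\epsilon) = \rho(B(M_\epsilon))$, and letting $\epsilon \to 0^+$, the continuity of the spectral radius in the matrix entries yields $\rho(M) = \rho(B(M))$.

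The main obstacle is precisely the reducible case. When $M$ is reducible, the block-constant extension $Y$ of the Perron vector of $B$ need not be strictly positive, and the Perron--Frobenius theorem does not guarantee uniqueness of nonnegative eigenvectors for $M$, so the clean argument of the irreducible case breaks down; a naive averaging of a Perron eigenvector of $M$ also fails because the columns of the blocks $M_{ij}$ need not have constant sums. The perturbation by $\epsilon J_n$ is the device that defuses these pathologies while simultaneously preserving nonnegativity and the equitability of the partition.
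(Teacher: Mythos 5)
Your argument is correct, and it actually settles something the paper does not prove: the statement labelled \ref{con31} is left in the paper as a \emph{conjecture}, supported only by Lemma \ref{lem32} (which gives the one containment $\sigma(B)\subseteq\sigma(M)$, hence $\rho(B)\le\rho(M)$), by the worked examples (the Petersen graph, complete multipartite graphs, clique stars), and by the interlacing tools of Lemma \ref{lem33} and Theorem \ref{thm34}, which apply only to symmetric $M$ and in any case yield inequalities rather than equality. Your route supplies exactly the missing reverse inequality, and the three ingredients you add are the right ones: (a) equitability converts any positive entry of an off-diagonal block $M_{ij}$ into $b_{ij}>0$, so irreducibility of $M$ passes to $B$ and the Perron vector $y$ of $B$ is strictly positive; (b) the block-constant lift $Y\gg 0$ satisfies $MY=\rho(B)Y$ by the computation in Lemma \ref{lem32}, and the Perron--Frobenius uniqueness of positive eigenvectors for irreducible nonnegative matrices forces $\rho(B)=\rho(M)$ (one could even bypass irreducibility of $M$ here, since a nonnegative matrix with a strictly positive eigenvector has that eigenvalue equal to its spectral radius, by a diagonal similarity and Lemma \ref{lem24}); (c) the perturbation $M_\epsilon=M+\epsilon J_n$ preserves the partition's equitability, satisfies $B(M_\epsilon)=B(M)+\epsilon B(J_n)$ by linearity of the averaging, and continuity of the spectral radius then handles reducible $M$. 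Compared with the paper's symmetric-only interlacing machinery, your proof covers the nonsymmetric matrices that actually occur in Sections 4 and 5 (adjacency, signless Laplacian, distance and distance signless Laplacian matrices of digraphs), so it upgrades the conjecture to a theorem in the generality the applications need; the only things worth stating explicitly in a write-up are the two standard facts you invoke, namely that a nonnegative irreducible matrix admits no positive eigenvector for any eigenvalue other than its Perron root, and that eigenvalues (hence the spectral radius) depend continuously on the matrix entries.
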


Consider two sequences of real numbers: $\lambda_{1}\geq \lambda_{2} \geq ... \geq\lambda_{n}$, and $\mu_{1}\geq \mu_{2}\geq ...\geq \mu_{m}$  with $m<n$. The second sequence is said to interlace the first one whenever
$\lambda_{i}\geq \mu_{i}\geq \lambda_{n-m+i}$ for $i=1,2,...,m$. The interlacing is called tight if
there exists an integer $k \in [1,m]$ such that $\lambda_{i}=\mu_{i}$ hold for $1\leq i \leq k$ and $\lambda_{n-m+i}= \mu_{i}$ hold for $k + 1 \leq i \leq m$.

\begin{lem}{\rm(\cite{1995H})}\label{lem33}
Let $M$ be a symmetric matrix and  have the block form as (\ref{eq31}),
$B$ be the quotient matrix of $M$. Then

{\rm (1) } The eigenvalues of $B$ interlace the eigenvalues of $M$.

{\rm (2) } If the interlacing is tight, then $B$ is the equitable matrix of $M$.
\end{lem}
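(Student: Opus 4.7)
The plan is to reduce both parts to the classical Cauchy interlacing theorem by realizing $B$ as an orthogonal compression of $M$ onto a canonical $t$-dimensional subspace. Let $P$ be the $n\times t$ characteristic matrix of the partition, i.e.\ the $i$-th column of $P$ is the indicator vector of the $i$-th block, so $P^{T}P = D := \mathrm{diag}(n_1,\ldots,n_t)$ and $(P^{T}MP)_{ij}$ equals the sum of all entries of $M_{ij}$. By the definition of the quotient matrix in Definition \ref{defn31}, $B = D^{-1}P^{T}MP$. Setting $S = PD^{-1/2}$, one has $S^{T}S = I_t$, and the matrix $\widetilde B := S^{T}MS = D^{-1/2}P^{T}MPD^{-1/2}$ is symmetric (since $M$ is) and similar to $B$ via $B = D^{-1/2}\widetilde B D^{1/2}$. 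In particular $B$ and $\widetilde B$ share the same spectrum, and we have reduced the problem to analysing $\widetilde B = S^{T}MS$ with $S$ having orthonormal columns.

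For part (1), I would invoke the Courant--Fischer characterisation of eigenvalues of a symmetric matrix. Writing $\lambda_1\geq\cdots\geq\lambda_n$ for the eigenvalues of $M$ and $\mu_1\geq\cdots\geq\mu_t$ for those of $\widetilde B$,
$$\mu_i \;=\; \max_{\dim W = i}\ \min_{0\neq y\in W}\frac{(Sy)^{T}M(Sy)}{\|Sy\|^{2}},$$
where I used $\|Sy\| = \|y\|$. Since $\{Sy:y\in W\}$ is an $i$-dimensional subspace of $\mathbb{R}^{n}$, restricting the outer maximum in the Rayleigh quotient for $M$ to such subspaces yields $\mu_i\leq\lambda_i$. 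The dual min--max characterisation, applied with the roles reversed, gives $\mu_i\geq\lambda_{n-t+i}$, establishing the interlacing. This is exactly Cauchy's classical interlacing inequality for orthogonal compressions.

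For part (2), assume the interlacing is tight, so that $\lambda_i=\mu_i$ for $1\leq i\leq k$ and $\lambda_{n-t+i}=\mu_i$ for $k+1\leq i\leq t$. Standard eigenvalue-extremality arguments then force each Rayleigh-quotient extremum achieved by a $\widetilde B$-eigenvector $y_i$ to also be achieved in $\mathbb{R}^{n}$, from which one concludes that $Sy_i$ is an $M$-eigenvector with eigenvalue $\mu_i$. Because $\{y_1,\ldots,y_t\}$ is an orthonormal basis of $\mathbb{R}^{t}$, this gives $MS = S\widetilde B$, i.e.\ the column space of $S$ is $M$-invariant. Written block-wise, $MS=S\widetilde B$ says precisely that for every pair $(r,j)$ the vector $M_{rj}\mathbf{1}_{n_j}$ is a constant multiple of $\mathbf{1}_{n_r}$; equivalently, every block $M_{rj}$ has constant row sum, which by Definition \ref{defn31} means the partition is equitable. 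The main obstacle is the tightness step: controlling simultaneously the top-$k$ and bottom-$(t-k)$ equalities to deduce the full invariance $MS=S\widetilde B$ requires a careful argument (essentially the one given by Haemers), but once invariance is in hand the equitability assertion is an immediate unpacking of the block structure.
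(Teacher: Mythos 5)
The paper itself offers no proof of Lemma \ref{lem33}: it is quoted verbatim from Haemers \cite{1995H}, so there is no internal argument to compare yours against. Your proposal is, in substance, exactly Haemers' own proof: realizing the quotient matrix as a compression $\widetilde B=S^{T}MS$ with $S=PD^{-1/2}$, $S^{T}S=I_t$, noting $B=D^{-1/2}\widetilde B D^{1/2}$ so that $B$ and $\widetilde B$ are cospectral, and then applying Courant--Fischer to get $\lambda_i\geq\mu_i\geq\lambda_{n-t+i}$; part (1) as you wrote it is complete and correct. The only soft spot is part (2), where the decisive step --- that tightness forces $MSy_i=\mu_i Sy_i$ for \emph{every} $i$, hence $MS=S\widetilde B$ --- is asserted rather than argued. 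The standard way to close it (and the content of Haemers' auxiliary lemma) is an induction: if $\lambda_i=\mu_i$ for $1\leq i\leq k$, then equality in the Rayleigh-quotient extremum at step $i$ forces $Sy_i$ to lie in the span of eigenvectors of $M$ for eigenvalues $\geq\lambda_i$, and orthogonality to the previously produced eigenvectors $Sy_1,\ldots,Sy_{i-1}$ pins it to the $\lambda_i$-eigenspace; the dual argument handles the indices $k+1,\ldots,t$ matched to the bottom eigenvalues, and together these give $MS=S\widetilde B$. From there your final unpacking is right: $MS=S\widetilde B$ is equivalent to $MP=PB$, i.e.\ $M_{rj}\mathbf{1}_{n_j}=b_{rj}\mathbf{1}_{n_r}$ for all $r,j$, which is precisely the constant-row-sum (equitable) condition of Definition \ref{defn31}. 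So the route is the canonical one and correct in outline; just be aware that the tightness step is the real work in part (2), not a routine remark.
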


By Lemmas \ref{lem32}-\ref{lem33}, we have the following result immediately.

\begin{them}\label{thm34}%{\rm(\cite{2011}, Chapter 2)}
Let $M=(m_{ij})_{n\times n}$ be a symmetric matrix and defined as (\ref{eq31}),
$B=B(M)$ be the  quotient matrix of $M$, and  $\mu_{1}\geq \mu_{2}\geq ...\geq \mu_{m}$ be all  eigenvalues of $B$.
Then $\mu_{1}, \mu_{2}, ..., \mu_{m}$  are  eigenvalues of $M$ if and only if $B$ is the equitable matrix of $M$.
\end{them}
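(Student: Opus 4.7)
The plan is to deduce both directions of the equivalence by combining the two preceding lemmas, with Lemma \ref{lem32} handling the sufficiency and Lemma \ref{lem33} handling the necessity.

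For the sufficiency, suppose $B$ is the equitable quotient matrix of $M$. Then each block $M_{ij}$ has constant row sum, and Lemma \ref{lem32} asserts that every eigenvalue of $B$ is an eigenvalue of $M$; applying this to each of $\mu_1,\ldots,\mu_m$ in turn gives the claim.

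For the necessity, assume that every $\mu_i$ is an eigenvalue of $M$, and write the spectrum of $M$ as $\lambda_1\ge \lambda_2\ge \cdots \ge \lambda_n$. Because $M$ is symmetric, Lemma \ref{lem33}(1) supplies the interlacing inequalities
\[
\lambda_i \;\ge\; \mu_i \;\ge\; \lambda_{n-m+i}, \qquad i=1,2,\ldots,m.
\]
Since each $\mu_i$ coincides with some $\lambda_{j_i}$ with $i\le j_i\le n-m+i$, and both sequences are listed in non-increasing order, the matching indices $j_1,\ldots,j_m$ can be arranged to be weakly increasing. The next step is to argue, accounting for multiplicities, that the matching is forced into the extremal form prescribed by tightness: there exists $k\in\{1,\ldots,m\}$ such that $j_i=i$ for $1\le i\le k$ and $j_i=n-m+i$ for $k+1\le i\le m$. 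This is precisely the definition of tight interlacing, and Lemma \ref{lem33}(2) then yields that $B$ is the equitable quotient matrix of $M$.

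The main obstacle is the passage from the hypothesis ``each $\mu_i\in\sigma(M)$'' to the conclusion that the interlacing is tight. The interlacing bounds a priori leave room for intermediate matchings $j_i\in\{i,\,i+1,\,\ldots,\,n-m+i\}$, so this step is not purely formal; it requires careful bookkeeping, especially when $M$ has eigenvalues of multiplicity greater than one, to rule out non-extremal matchings that still place every $\mu_i$ inside $\sigma(M)$. Once tightness has been secured, the remainder of the argument is a direct invocation of Lemma \ref{lem33}(2).
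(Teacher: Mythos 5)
Your sufficiency direction is fine and matches the paper: when $B$ is the equitable quotient matrix, Lemma \ref{lem32} puts every eigenvalue of $B$ into $\sigma(M)$. The problem is the necessity direction, and you have in fact flagged the gap yourself without closing it. You need the implication ``every $\mu_i$ lies in $\sigma(M)$'' $\Rightarrow$ ``the interlacing is tight,'' but this does not follow from the interlacing inequalities plus membership, no matter how carefully one does the bookkeeping. Tightness is the very specific matching $\mu_i=\lambda_i$ for $i\le k$ and $\mu_i=\lambda_{n-m+i}$ for $i>k$, whereas membership only gives $\mu_i=\lambda_{j_i}$ for some $j_i$ with $i\le j_i\le n-m+i$. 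Already for $n=4$, $m=2$ the assignment $\mu_1=\lambda_2$, $\mu_2=\lambda_3$ satisfies both the interlacing inequalities and the membership hypothesis but is not tight for any $k$; nothing in your argument rules out such intermediate matchings, and multiplicity considerations alone cannot do so either, since the obstruction appears even when all eigenvalues involved are simple. So the proposal, as written, proves only one direction; the step you call ``the main obstacle'' is exactly the missing content, and it would require a genuinely different input (some argument tied to the structure of the quotient construction, not just the positions of the $\mu_i$ inside the spectrum).

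For comparison, the paper offers no more detail than you do: it states the theorem as following ``immediately'' from Lemmas \ref{lem32}--\ref{lem33}, i.e.\ it implicitly takes the same route (Lemma \ref{lem32} for sufficiency, tight interlacing plus Lemma \ref{lem33}(2) for necessity) and silently skips precisely the step you could not complete. So you have reproduced the intended argument, but neither your write-up nor the paper's actually bridges membership to tightness, and that bridge is where the real work (or a counterexample) lies.
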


\vskip.3cm

\section{Spectral radius of strongly connected digraphs with given connectivity}

\hskip.6cm Let $\Omega(n,k)$ be the set of all simple strong connected digraphs on $n$ vertices with vertex connectivity $k$. Let $\overrightarrow{G}_1 \bigtriangledown \overrightarrow{G}_2$ denote the digraph $G=(V,E)$ obtained from two disjoint digraphs $\overrightarrow{G}_1$, $\overrightarrow{G}_2$ with vertex set $V=V(\overrightarrow{G}_1)\cup V(\overrightarrow{G}_2)$ and arc set $E = E(\overrightarrow{G}_1) \cup E(\overrightarrow{G}_2) \cup \{(u,v),(v,u)|u \in V(\overrightarrow{G}_1), v \in V(\overrightarrow{G}_2)\}.$

Let $p,k$ be integers with $1\leq k\leq n-2, 1\leq p\leq n-k-1,$  $\overrightarrow{K}(n,k,p)$ denote the digraph $\overrightarrow{K_k} \bigtriangledown (\overrightarrow{K_p} \cup \overrightarrow{K}_{n - p - k}) \cup E,$ where $E = \{(u,v)|u \in \overrightarrow{K_p}, v \in \overrightarrow{K}_{n - p - k}\}$. Clearly, $\overrightarrow{K}(n,k,p)\in \Omega(n,k).$
Then the adjacency matrix, the signless Laplacian matrix, the distance matrix, the distance signless Laplacian matrix of
 $\overrightarrow{K}(n,k,p)$ are as follows,  where $q=n-p-k$.

 $$A(\overrightarrow{K}(n,k,p)) = \left(\begin{array}{lcr}
  J_p-I_p &  J_{p,k} &  J_{p,q}\\
  J_{k,p} & J_k-I_k  &  J_{k,q}\\
  \mathbf{0}_{q,p} &  J_{q,k} & J_q-I_q\\
  \end{array}\right),$$
  $$Q(\overrightarrow{K}(n,k,p)) = \left(\begin{array}{lcr}
 J_p+(n-2)I_p &  J_{p,k} &  J_{p,q}\\
  J_{k,p} & J_k+(n-2)I_k &  J_{k,q}\\
  \mathbf{0}_{q,p} &  J_{q,k} &  J_q+(n-p-2)I_q\\
  \end{array}\right), $$
  $$\mathcal{D}(\overrightarrow{K}(n,k,p)) = \left(\begin{array}{lcr}
 J_p-I_p &  J_{p,k} &  J_{p,q}\\
  J_{k,p} & J_k-I_k &  J_{k,q}\\
  2J_{q,p} &  J_{q,k} &  J_q-I_q\\
  \end{array}\right),$$
   $$\mathcal{Q}(\overrightarrow{K}(n,k,p)) =\left(\begin{array}{lcr}
 J_p+(n-2)I_p &  J_{p,k} &  J_{p,q}\\
  J_{k,p} & J_k+(n-2)I_k &  J_{k,q}\\
  2J_{q,p} &  J_{q,k} &  J_q+(n+p-2)I_q\\
  \end{array}\right).$$
    \noindent

\vskip.3cm

\begin{prop}\label{prop41}{\rm(\cite{1976})}
Let $\overrightarrow{G}$ be a strongly connected digraphs with vertex connectivity $k$. Suppose that $S$ is a $k$-vertex cut of $\overrightarrow{G}$ and $\overrightarrow{G}_1, \overrightarrow{G}_2,\ldots, \overrightarrow{G}_t$ are the strongly connected components of $\overrightarrow{G}-S$. Then there exists an ordering of $\overrightarrow{G}_1, \overrightarrow{G}_2,\ldots, \overrightarrow{G}_t$ such that for $1\leq i\leq t$ and $v\in V(\overrightarrow{G}_i)$, every tail of $v$ is in $\bigcup \limits_{j=1}^{i-1}\overrightarrow{G}_j$.
\end{prop}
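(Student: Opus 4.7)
The plan is to recognize the proposition as the standard fact that the condensation of any digraph is a directed acyclic graph, which therefore admits a topological ordering. The statement should be read as saying: for the ordering to be constructed, whenever $v \in V(\overrightarrow{G}_i)$ has an in-neighbor $u$ lying outside $V(\overrightarrow{G}_i) \cup S$, that $u$ must belong to some $\overrightarrow{G}_j$ with $j < i$; equivalently, no arc of $\overrightarrow{G}$ leads from a later component into an earlier one.

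First I would form the condensation $D$ of $\overrightarrow{G} - S$, whose vertex set is $\{\overrightarrow{G}_1,\overrightarrow{G}_2,\ldots,\overrightarrow{G}_t\}$ and in which we place an arc from $\overrightarrow{G}_a$ to $\overrightarrow{G}_b$ precisely when some arc $(u,v)\in E(\overrightarrow{G})$ satisfies $u\in V(\overrightarrow{G}_a)$, $v\in V(\overrightarrow{G}_b)$, $a\neq b$. Next I would show that $D$ is acyclic: if $\overrightarrow{G}_{i_1}\to\overrightarrow{G}_{i_2}\to\cdots\to\overrightarrow{G}_{i_s}\to\overrightarrow{G}_{i_1}$ were a directed cycle in $D$, then picking an arc witnessing each step and splicing it together with directed paths inside each component (which exist by strong connectedness of each $\overrightarrow{G}_{i_r}$) would show that the union $\bigcup_r V(\overrightarrow{G}_{i_r})$ is strongly connected in $\overrightarrow{G}-S$, contradicting the maximality of the strongly connected components.

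Once $D$ is known to be a DAG, I would invoke the standard topological-sort argument: iteratively remove a vertex of $D$ with no in-arc (one always exists in a finite DAG), listing the components in the order of removal. Relabelling the $\overrightarrow{G}_i$ according to this order, any arc of $D$ from $\overrightarrow{G}_j$ to $\overrightarrow{G}_i$ then forces $j<i$. Translating back: if $v\in V(\overrightarrow{G}_i)$ and $u$ is a tail of $v$ lying in $V(\overrightarrow{G})\setminus(V(\overrightarrow{G}_i)\cup S)$, then $u\in V(\overrightarrow{G}_j)$ with $j<i$, which is precisely the conclusion.

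There is essentially no technical obstacle here; the only point requiring a little care is the verification that $D$ is acyclic, since it rests on correctly combining arcs of $D$ with internal strongly-connecting paths of the components. Everything else is bookkeeping, and in particular no use of the vertex cut $S$ beyond defining the components is needed — the proposition is a purely combinatorial statement about the component structure of $\overrightarrow{G}-S$.
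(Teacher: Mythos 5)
Your argument is correct. Note that the paper itself gives no proof of this proposition: it is quoted as a known result from Bondy--Murty \cite{1976}, so there is nothing internal to compare against. Your proof --- condensing $\overrightarrow{G}-S$, checking the condensation is acyclic by splicing cross-arcs with internal paths of the strong components (contradicting their maximality), and then taking a topological order by repeatedly deleting a source --- is exactly the standard argument behind that textbook fact, and it is complete. Your reading of ``every tail of $v$'' as referring to in-neighbors outside $V(\overrightarrow{G}_i)\cup S$ is also the right one; it matches how the paper uses the proposition in Remark \ref{rem42}, and you are correct that the cut $S$ plays no role beyond defining $\overrightarrow{G}-S$.
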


\begin{rem}\label{rem42}
By Proposition \ref{prop41}, we know that $\overrightarrow{G}_1$ is the strongly connected component of $\overrightarrow{G}-S$ where the inneighbors of vertices of $V(\overrightarrow{G}_1)$ in $\overrightarrow{G}-S-\overrightarrow{G}_1$ are zero. Let $\overrightarrow{G}_2=\overrightarrow{G}-S-\overrightarrow{G}_1$. We add arcs to $\overrightarrow{G}$ until both induced subdigraph of $V(\overrightarrow{G}_1)\cup S$ and induced subdigraph of $V(\overrightarrow{G}_2)\cup S$ attain to complete digraphs, add arc $(u,v)$ for any $u\in V(\overrightarrow{G}_1)$ and any $v\in V(\overrightarrow{G_2})$, the new digraph denoted by $\overrightarrow{H}$. Clearly, $\overrightarrow{H}=\overrightarrow{K}(n,k,p)\in \Omega(n,k)$ for some $p$ such that $1\leq p\leq n-k-1$.
Since $\overrightarrow{G}$ is the spanning subdigraph of $\overrightarrow{H}$,
then by Corollary \ref{cor23}, we have $\rho(\overrightarrow{G})\leq \rho(\overrightarrow{K}(n,k,p))$,
$q(\overrightarrow{G})\leq q(\overrightarrow{K}(n,k,p))$, $\rho^{\mathcal{D}}(\overrightarrow{G})\geq \rho^{\mathcal{D}}(\overrightarrow{K}(n,k,p))$
and  $q^{\mathcal{D}}(\overrightarrow{G})\geq q^{\mathcal{D}}(\overrightarrow{K}(n,k,p))$.
Thus the extremal digraphs which achieve the maximal  (signless Laplacian) spectral radius and the minimal
 distance (signless Laplacian) spectral radius  in $\Omega(n,k)$ must be some $\overrightarrow{K}(n,k,p)$ for $1\leq p\leq n-k-1$.
\end{rem}

\begin{them}
Let $n,k$ be given positive integers with $1\leq k\leq n-2$, $\overrightarrow{G}\in \Omega(n,k).$ Then

(i). {\rm(\cite{2012DM})} \begin{equation}\label{eq41}
\rho(\overrightarrow{G})\leq \frac{n-2+\sqrt{(n-2)^2+4k}}{2},
\end{equation}
with equality if and only if $\overrightarrow{G}\cong \overrightarrow{K}(n,k,1)$ or $\overrightarrow{G}\cong \overrightarrow{K}(n,k,n-k-1).$

(ii). {\rm(\cite{2014})} \begin{equation}\label{eq42}
q(\overrightarrow{G})\leq \frac{2n+k-3+\sqrt{(2n-k-3)^2+4k}}{2},
\end{equation}
with equality if and only if $\overrightarrow{G}\cong \overrightarrow{K}(n,k,n-k-1).$

(iii). {\rm(\cite{2012DM2})} \begin{equation}\label{eq43}
\rho^\mathcal{D}(\overrightarrow{G})\geq \frac{n-2+\sqrt{(n+2)^2-4k-8}}{2},
\end{equation}
with equality if and only if $\overrightarrow{G}\cong \overrightarrow{K}(n,k,1)$ or $\overrightarrow{G}\cong \overrightarrow{K}(n,k,n-k-1)$.

(iv). \begin{equation}\label{eq44}
q^\mathcal{D}(\overrightarrow{G})\geq \frac{3n-3+\sqrt{(n+3)^2-8k-16}}{2},
\end{equation}
with equality if and only if $\overrightarrow{G}\cong \overrightarrow{K}(n,k,1).$
\end{them}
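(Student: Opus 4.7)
By Remark~\ref{rem42}, any digraph in $\Omega(n,k)$ that minimizes $q^{\mathcal{D}}$ must be isomorphic to some $\overrightarrow{K}(n,k,p)$ with $1 \leq p \leq n-k-1$. Thus my plan is to compute $q^{\mathcal{D}}(\overrightarrow{K}(n,k,p))$ explicitly as a function of $p$ and then show the minimum is attained at $p=1$.

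First I would apply Lemma~\ref{lem34} to $\mathcal{Q}(\overrightarrow{K}(n,k,p))$: its three diagonal blocks have the form $J_{n_i} + p_i I_{n_i}$ (with $p_1 = p_2 = n-2$ and $p_3 = n+p-2$), and every off-diagonal block is a constant multiple of an all-ones matrix. The lemma then gives
\[
\sigma(\mathcal{Q}(\overrightarrow{K}(n,k,p))) = \sigma(B) \cup \{(n-2)^{[p-1]}, (n-2)^{[k-1]}, (n+p-2)^{[q-1]}\},
\]
where $B$ is the $3 \times 3$ quotient matrix and $q = n-p-k$. A short row-reduction of $\det(\lambda I - B)$ would factor out $\lambda - (n-2)$, leaving the quadratic
\[
f_p(\lambda) := \lambda^2 - (3n+p-4)\lambda + 2\bigl(n^2 + p^2 + pk - p - 3n + 2\bigr),
\]
whose discriminant I would check is positive throughout $p \in [1, n-k-1]$ by evaluating at the two endpoints and using concavity in $p$. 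The larger root $\lambda_p^+$ would then be shown to exceed both $n-2$ and $n+p-2$ (the latter via $\lambda_p^+ - (n+p-2) = \tfrac{1}{2}(n-p+\sqrt{\Delta}) > 0$), so $\lambda_p^+ = q^{\mathcal{D}}(\overrightarrow{K}(n,k,p))$. Substituting $p=1$ into $\lambda_p^+$ reproduces exactly the right-hand side of~(\ref{eq44}).

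The heart of the argument is proving that $\lambda_p^+ > \lambda_1^+$ whenever $p \geq 2$. My plan here is to exploit the clean identity
\[
f_p(\lambda) - f_1(\lambda) = (p-1)\bigl(2(p+k) - \lambda\bigr),
\]
which, together with $f_1(\lambda_1^+) = 0$, yields $f_p(\lambda_1^+) = (p-1)(2(p+k) - \lambda_1^+)$. It remains to verify $\lambda_1^+ > 2(p+k)$ for all $p \in [2, n-k-1]$; since $p+k \leq n-1$ throughout this range, it suffices to show $\lambda_1^+ > 2(n-1)$, and squaring reduces this to $k < n-1$, which is guaranteed by $k \leq n-2$ (the boundary case $k = n-2$ forces $p=1$, so there is nothing to compare). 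Hence $f_p(\lambda_1^+) < 0$ for $p \geq 2$, placing $\lambda_1^+$ strictly between the roots of the upward parabola $f_p$, so $\lambda_p^+ > \lambda_1^+$, which gives both the bound and the equality characterization.

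The hardest step should be the bookkeeping in the spectrum computation: even after Lemma~\ref{lem34} removes the bulk of the eigenvalues, one must still extract the correct quadratic factor from $\det(\lambda I - B)$ and carefully confirm that its larger root genuinely dominates the diagonal-block eigenvalue $n+p-2$, which can be fairly close to $\lambda_p^+$. Once the factorization of $f_p - f_1$ is noticed, the $p$-monotonicity drops out almost for free.
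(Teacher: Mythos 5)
Your argument covers only part (iv); parts (i)--(iii) are indeed quoted from earlier papers in the statement, but the paper re-derives all four bounds uniformly from Lemma \ref{lem34}, so be explicit that you are treating just the new claim. For (iv) your proof is correct, and your computation agrees with the paper's: both reduce via Remark \ref{rem42} to $\overrightarrow{K}(n,k,p)$, apply Lemma \ref{lem34} to get $\sigma(\mathcal{Q})=\sigma(B(\mathcal{Q}))\cup\{(n-2)^{[p+k-2]},(n+p-2)^{[q-1]}\}$, and identify $q^{\mathcal{D}}(\overrightarrow{K}(n,k,p))$ with the larger root of the quadratic factor (your $f_p$ has the same root sum $3n+p-4$ and discriminant $(n+3p)^2-16p^2-8kp$ as the paper's closed form). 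Where you genuinely diverge is the minimization over $p$: the paper sets $f(p)=\frac{3n+p-4+\sqrt{(n+3p)^2-16p^2-8kp}}{2}$, shows $\partial^2 f/\partial p^2<0$, so the minimum sits at an endpoint, and then rules out $f(n-k-1)\leq f(1)$ by a contradiction argument with $\alpha=k^2-6k-7+8n$, $\beta=n^2+6n-7-8k$; you instead use the factorization $f_p(\lambda)-f_1(\lambda)=(p-1)\bigl(2(p+k)-\lambda\bigr)$, check $\lambda_1^+>2(n-1)\geq 2(p+k)$ (which squares down to exactly $k\leq n-2$), and conclude $f_p(\lambda_1^+)<0$, hence $\lambda_p^+>\lambda_1^+$ for every $p\geq 2$. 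Your route avoids calculus, gives strict domination by $p=1$ over all interior values of $p$ rather than only an endpoint comparison, and your verification that $\lambda_p^+$ exceeds the block eigenvalue $n+p-2$ (via $\lambda_p^+-(n+p-2)=\tfrac12(n-p+\sqrt{\Delta})>0$) supplies a detail the paper leaves implicit; the paper's concavity argument, in exchange, is the template it reuses verbatim from \cite{2014} for part (ii). The equality case goes through as you say, since Corollary \ref{cor23}(iv) makes the reduction in Remark \ref{rem42} strict for proper subdigraphs, forcing $\overrightarrow{G}\cong\overrightarrow{K}(n,k,1)$.
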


\begin{proof}
Now we show (i) holds. We apply Lemma \ref{lem34} to $A=A(\overrightarrow{K}(n,k,p))$.
Since $t=3$, $l_i=1,p_i=-1$ for $1\leq i\leq 3$, $s_{31}=0$ and  $s_{ij}=1$ for others $i, j\in\{1,2,3\}$ and $i\not=j$, we have $\sigma(A)=\sigma(B(A))\cup\{(-1)^{[n-3]}\}$, where the corresponding equitable quotient matrix of $A$ is

$$B(A)= \left(\begin{array}{ccc}
 p-1 &  k  & q\\
 p  &  k-1 & q\\
 0  &  k & q-1\\
  \end{array}\right), $$
  the eigenvalues of $B(A)$ are $-1,\frac{n-2\pm\sqrt{4p^2-4(n-k)p+n^2}}{2}$.
Thus $\rho(A)=\frac{n-2+\sqrt{4p^2-4(n-k)p+n^2}}{2}$.

It is obvious that $\frac{n-2+\sqrt{4p^2-4(n-k)p+n^2}}{2}\leq \frac{n-2+\sqrt{(n-2)^2+4k}}{2}$, and equality holds if and only if $p=1$ or $p=n-k-1$. Thus (\ref{eq41}) holds and equality holds if and only if $\overrightarrow{G}\cong \overrightarrow{K}(n,k,1)$ or $\overrightarrow{G}\cong \overrightarrow{K}(n,k,n-k-1).$

Now we show (ii) holds.
Similarly, we apply Lemma \ref{lem34} to $Q=Q(\overrightarrow{K}(n,k,p))$.
Since $t=3$, $l_i=1$ for $1\leq i\leq 3$, $p_1=p_2=n-2$, $p_3=n-p-2,$ $s_{31}=0$   and  $s_{ij}=1$ for others $i, j\in\{1,2,3\}$ and $i\not=j$,
we have $\sigma(Q)=\sigma(B(Q))\cup\{(n-2)^{[p+k-2]},(n-p-2)^{[q-1]}\},$ where the corresponding equitable quotient matrix of $Q$ is

$$B(Q)= \left(\begin{array}{ccc}
 p+n-2 &  k  & q\\
 p  &  k+n-2 & q\\
 0  &  k & q+n-p-2\\
  \end{array}\right),$$ the eigenvalues of $B(Q)$ are $n-2,\frac{(3n-p-4)\pm\sqrt{(n-3p)^2+8pk}}{2}$.
  Thus $\rho(Q)=\frac{3n-p-4+\sqrt{(n-3p)^2+8pk}}{2}$.

By the same proof of Theorem 7.6 in \cite{2014}, we can show (\ref{eq42}) holds by proving
  $$\frac{3n-p-4+\sqrt{(n-3p)^2+8pk}}{2}\leq \frac{2n+k-3+\sqrt{(2n-k-3)^2+4k}}{2}$$   for $1\leq p\leq n-k-1$,
  and equality holds if and only if  $\overrightarrow{G}\cong \overrightarrow{K}(n,k,n-k-1).$

Now we show (iii) holds. We apply Lemma \ref{lem34} to $\mathcal{D}=\mathcal{D}(\overrightarrow{K}(n,k,p))$.
Since $l_i=1, p_i=-1$ for $1\leq i\leq 3$, $s_{31}=2$ and   $s_{ij}=1$ for others $i, j\in\{1,2,3\}$ and $i\not=j$, we have $\sigma(\mathcal{D})=\sigma(B(\mathcal{D}))\cup\{(-1)^{[n-3]}\},$ and the corresponding equitable quotient matrix of $\mathcal{D} $ is
$$B(\mathcal{D})= \left(\begin{array}{ccc}
 p-1 &  k  & q\\
 p  &  k-1 & q\\
 2p  &  k & q-1\\
  \end{array}\right),$$ the eigenvalues  of $B(\mathcal{D})$ are $-1,\frac{(n-2)\pm\sqrt{-4p^2+4(n-k)p+n^2}}{2}$.
Thus $\rho(\mathcal{D})=\frac{n-2+\sqrt{-4p^2+4(n-k)p+n^2}}{2}$.

It is obvious that $\frac{n-2+\sqrt{-4p^2+4(n-k)p+n^2}}{2}\geq \frac{n-2+\sqrt{(n+2)^2-4k-8}}{2}$,  and equality holds if and only if
$p=1$ or $p=n-k-1$. Thus (\ref{eq43}) holds and equality holds if and only if $\overrightarrow{G}\cong \overrightarrow{K}(n,k,1)$ or $\overrightarrow{G}\cong \overrightarrow{K}(n,k,n-k-1).$

Now we show (iv) holds.  We apply Lemma \ref{lem34} to $\mathcal{Q}=\mathcal{Q}(\overrightarrow{K}(n,k,p))$. Since $l_1=l_2=l_3=1,$ $p_1=p_2=n-2, $ $p_3=n+p-2,$ $s_{31}=2$ and   $s_{ij}=1$ for others $i, j\in\{1,2,3\}$ and $i\not=j$,  we have $\sigma(\mathcal{Q})=\sigma(B(\mathcal{Q}))\cup\{(n-2)^{[p+k-2]},(n+p-2)^{[q-1]}\},$
and the corresponding equitable quotient matrix of $\mathcal{Q} $ is
$$B(\mathcal{Q})= \left(\begin{array}{ccc}
 p+n-2 &  k  & q\\
 p  &  k+n-2 & q\\
 2p  &  k & q+n+p-2\\
  \end{array}\right),$$  the eigenvalues  of $B(\mathcal{Q})$ are $n-2,\frac{(3n+p-4)\pm\sqrt{(n+3p)^2-16p^2-8kp}}{2}$.
  Thus $$\rho(\mathcal{Q})= \frac{3n+p-4 + \sqrt{(n+3p)^2-16p^2-8kp}}{2}.$$

  Now we show $\frac{3n+p-4 + \sqrt{(n+3p)^2-16p^2-8kp}}{2}\geq \frac{3n-3+\sqrt{(n+3)^2-8k-16}}{2}$ for $1\leq p\leq n-k-1$, and the equality holds if and only if $p=1$.
  Let $f(p)=\frac{3n+p-4 + \sqrt{(n+3p)^2-16p^2-8kp}}{2}$. Then
$$\frac{\partial ^{2}f(p)}{\partial p^2} = \frac{-4((2n-k)(n-k)+k^2)}{((n+3p)^2-16p^2-8kp)^{\frac{3}{2}}}<0.$$

Thus, for fixed $n$ and $k$, the minimal value of $f(p)$ must be taken at either $p=1$ or $p=n-k-1$. Let $\alpha=k^2-6k-7+8n$ and $\beta=n^2+6n-7-8k$. Then
$$2[f(n-k-1) - f(1)]=n-k-2+\sqrt{\alpha}-\sqrt{\beta}=(n-k-2)(1-\frac{n+k}{\sqrt{\alpha}+\sqrt{\beta}}).$$

We can assume that $n > k+2$ since in case $k=n-2$ there is only one value of $p$ under consideration.
Now suppose that $f(n-k-1) - f(1)\leq 0$. We will produce a contradiction. We have
$$\sqrt{\alpha}+\sqrt{\beta}\leq n+k , \sqrt{\alpha}-\sqrt{\beta}\leq -n+k+2.$$
Whence $\sqrt{\alpha}\leq k+1$ and $\alpha\leq (k+1)^2$ which reduces to $k\geq n-1$ which is out of range. Thus
$f(n-k-1) > f(1)$ and $q^\mathcal{D}(\overrightarrow{G})\geq f(1)\texttt{}=q^\mathcal{D}(\overrightarrow{K}(n,k,1))=\frac{3n-3+\sqrt{(n+3)^2-8k-16}}{2}$,
with equality if and only if $\overrightarrow{G}\cong \overrightarrow{K}(n,k,1).$
\end{proof}

It is natural that  whether there exists similar result for
 the  Laplacian spectral radius or the  distance Laplacian spectral radius  in $\Omega(n,k)$ or not?
 In fact, we can obtain the spectrum of  the  Laplacian matrix or the  distance Laplacian matrix of $\overrightarrow{K}(n,k,p)$ immediately.

\begin{prop}\label{prop44}
Let $\overrightarrow{K}(n,k,p)$ defined as before. Then

(i). $\sigma(L(\overrightarrow{K}(n,k,p)))=\{0, n^{[p+k-1]}, (n-p)^{[q]}\}.$

(ii). $\sigma(\mathcal{L}(\overrightarrow{K}(n,k,p)))=\{0, n^{[p+k-1]}, (n+p)^{[q]}\}.$
\end{prop}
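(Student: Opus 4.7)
The plan is to deduce both statements as direct applications of Lemma 3.4 to the $3\times 3$ block partition $V(\overrightarrow{K}(n,k,p)) = V(\overrightarrow{K_p}) \cup V(\overrightarrow{K_k}) \cup V(\overrightarrow{K_q})$ with $q = n - p - k$. For (i), I first read off outdegrees from the adjacency matrix listed before the theorem: vertices in the first two blocks have outdegree $n-1$, while those in $\overrightarrow{K_q}$ have outdegree $n-p-1$ because they send no arcs into $\overrightarrow{K_p}$. Subtracting $A(\overrightarrow{K}(n,k,p))$ from the diagonal matrix of outdegrees gives
$$L(\overrightarrow{K}(n,k,p)) = \begin{pmatrix} nI_p - J_p & -J_{p,k} & -J_{p,q} \\ -J_{k,p} & nI_k - J_k & -J_{k,q} \\ \mathbf{0}_{q,p} & -J_{q,k} & (n-p)I_q - J_q \end{pmatrix},$$
which fits Lemma 3.4 with $l_1=l_2=l_3=-1$, $p_1=p_2=n$, $p_3=n-p$, $s_{31}=0$, and all other $s_{ij}=-1$. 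The lemma immediately supplies the multiplicities $n^{[p-1]}, n^{[k-1]}, (n-p)^{[q-1]}$, and the remaining three eigenvalues of $L$ are those of the $3\times 3$ equitable quotient matrix $B(L)$.

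Because every row of $L$ sums to zero, the rows of $B(L)$ sum to zero as well, so $0$ is one eigenvalue of $B(L)$. A short computation then shows that $\operatorname{tr} B(L) = 2n-p$ and that the sum of the $2\times 2$ principal minors of $B(L)$ equals $n(n-p)$, which forces the characteristic polynomial of $B(L)$ to factor as $\lambda(\lambda-n)(\lambda-(n-p))$. Combining this with the Lemma 3.4 multiplicities yields $\sigma(L(\overrightarrow{K}(n,k,p))) = \{0, n^{[p+k-1]}, (n-p)^{[q]}\}$, which is part (i).

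Part (ii) follows the identical pattern. The transmissions in $\overrightarrow{K}(n,k,p)$ are $n-1$ on the first two blocks and $n+p-1$ on $\overrightarrow{K_q}$, since a vertex of $\overrightarrow{K_q}$ reaches $\overrightarrow{K_p}$ only via $\overrightarrow{K_k}$ and so contributes $2p$ in place of $p$ to its transmission. Forming $\mathcal{L} = Tr - \mathcal{D}$ reproduces the block shape of $L$ except that the $(3,1)$ block becomes $-2J_{q,p}$ and the $(3,3)$ block becomes $(n+p)I_q - J_q$, fitting Lemma 3.4 with $p_3 = n+p$ and $s_{31} = -2$. The lemma contributes $n^{[p-1]}, n^{[k-1]}, (n+p)^{[q-1]}$, and the same row-sum argument together with the checks $\operatorname{tr} B(\mathcal{L}) = 2n+p$ and second symmetric function equal to $n(n+p)$ identifies the remaining eigenvalues of $B(\mathcal{L})$ as $0$, $n$, $n+p$. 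The only real obstacle is this $3\times 3$ eigenvalue bookkeeping, which is forced once the row-sum-zero observation is combined with two scalar identities reduced via $n=p+k+q$, and the multiplicity count $1 + (p+k-1) + q = n$ in each part provides a final sanity check.
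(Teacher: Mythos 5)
Your proof is correct and follows essentially the same route as the paper: write $L(\overrightarrow{K}(n,k,p))$ and $\mathcal{L}(\overrightarrow{K}(n,k,p))$ in the block form required by Lemma \ref{lem34}, harvest the eigenvalues $n^{[p-1]},n^{[k-1]},(n\mp p)^{[q-1]}$ from the diagonal blocks, and get the remaining three eigenvalues from the $3\times 3$ equitable quotient matrix (the paper's ``directly calculating'' step, which you carry out via the zero row sums, the trace, and the second elementary symmetric function — all of which check out). No gaps; the extra verification of outdegrees and transmissions only makes explicit what the paper states without comment.
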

\begin{proof}
Firstly, the  Laplacian matrix $L(\overrightarrow{K}(n,k,p))$ and
the  distance Laplacian matrix $\mathcal{L}(\overrightarrow{K}(n,k,p))$ of $\overrightarrow{K}(n,k,p)$ are the following matrices,
where $q=n-p-k$.

 $$L=L(\overrightarrow{K}(n,k,p)) = \left(\begin{array}{ccc}
  -J_p+nI_p &  -J_{p,k} &  -J_{p,q}\\
  -J_{k,p} & -J_k+nI_k  &  -J_{k,q}\\
  \mathbf{0}_{q,p} &  -J_{q,k} & -J_q+(n-p)I_q\\
  \end{array}\right),$$

$$\mathcal{L}=\mathcal{L}(\overrightarrow{K}(n,k,p)) = \left(\begin{array}{ccc}
 -J_p+nI_p &  -J_{p,k} &  -J_{p,q}\\
  -J_{k,p} & -J_k+nI_k &  -J_{k,q}\\
  -2J_{q,p} &  -J_{q,k} &  -J_q+(n+p)I_q\\
  \end{array}\right).$$

Then the corresponding equitable quotient matrices are as follows:
$$B(L)= \left(\begin{array}{ccc}
 n-p &    -k  & -q\\
 -p  &    n-k  &  -q\\
 0   &   -k  &  k\\
  \end{array}\right), \qquad
B(\mathcal{L})= \left(\begin{array}{lcr}
 n-p &    -k  & -q\\
 -p  &    n-k  &  -q\\
 -2p   &   -k  &  n+p-q\\
  \end{array}\right).$$

  Then by Lemma \ref{lem34} and directly calculating, we obtain (i) and (ii).
\end{proof}

\vskip.3cm

\section{Spectral radius of connected graphs with given connectivity}

\hskip.6cm Let  $\mathcal{C}(n,k)$ be the set of all simple connected  graphs on $n$ vertices with vertex connectivity $k$.
Let ${G_1}\bigtriangledown  {G_2}$ denote the graph $G=(V,E)$ obtained from two disjoint graphs ${G_1}$, ${G_2}$ by joining each vertex of $G_1$ to each vertex of $G_2$ with vertex set $V=V({G}_1)\cup V({G}_2)$ and edge set $E = E({G}_1) \cup E({G}_2) \cup \{uv | u \in V( {G}_1), v \in V( {G}_2)\}$.

Let $p,k$ be integers with $1\leq k\leq n-2, 1\leq p\leq n-k-1,$ and  ${K}(n,k,p)$ be the graph
 ${K_k} \bigtriangledown ({K_p} \cup {K}_{n-p-k})$. Clearly, ${K}(n,k,p)\in \mathcal{C}(n,k).$
Then the adjacency matrix, the signless Laplacian matrix, the distance matrix, the distance signless Laplacian matrix of ${K}(n,k,p)$ are as follows, where $q=n-p-k$.

 $$A({K}(n,k,p)) = \left(\begin{array}{lcr}
  J_p-I_p &  J_{p,k} &  \mathbf{0}_{p,q}\\
  J_{k,p} & J_k-I_k  &  J_{k,q}\\
  \mathbf{0}_{q,p} &  J_{q,k} & J_q-I_q\\
  \end{array}\right),$$
  $$Q({K}(n,k,p)) = \left(\begin{array}{lcr}
 J_p+(p+k-2)I_p &  J_{p,k} &  \mathbf{0}_{p,q}\\
  J_{k,p} & J_k+(n-2)I_k &  J_{k,q}\\
  \mathbf{0}_{q,p} &  J_{q,k} &  J_q+(n-p-2)I_q\\
  \end{array}\right), $$
  $$\mathcal{D}({K}(n,k,p)) = \left(\begin{array}{lcr}
 J_p-I_p &  J_{p,k} &  2J_{p,q}\\
  J_{k,p} & J_k-I_k &  J_{k,q}\\
  2J_{q,p} &  J_{q,k} &  J_q-I_q\\
  \end{array}\right),$$
   $$\mathcal{Q}({K}(n,k,p)) =\left(\begin{array}{lcr}
 J_p+(n+q-2)I_p &  J_{p,k} &  2J_{p,q}\\
  J_{k,p} & J_k+(n-2)I_k &  J_{k,q}\\
  2J_{q,p} &  J_{q,k} &  J_q+(n+p-2)I_q\\
  \end{array}\right).$$
    \noindent

\vskip1cm

\begin{rem}\label{rem51}
Let ${G}$ be a  connected graphs with vertex connectivity $k$. Suppose that $S$ is a $k$-vertex cut of ${G}$,
%Let $S$ be a vertex subset of $G$ such that $|S| = k$ and $G - S$ is disconnected, where
and $G_1$ is a connected component of $G - S$. Let ${G_2}={G}-S-{G_1}$, we add edges to ${G}$ until both induced subgraph of $V({G_1})\cup S$ and induced subgraph of $V({G_2})\cup S$ attain to complete graphs, the new graph denoted by ${H}$. Clearly, ${H}={K}(n,k,p)\in \mathcal{C}(n,k)$ for some $p$ such that $1\leq p\leq n-k-1$. Since ${G}$ is the spanning subgraph of ${H}$, then by Corollary \ref{cor23},
we have $\rho({G})\leq \rho({K}(n,k,p))$,
$q({G})\leq q({K}(n,k,p))$, $\rho^{\mathcal{D}}({G})\geq \rho^{\mathcal{D}}({K}(n,k,p))$
and  $q^{\mathcal{D}}({G})\geq q^{\mathcal{D}}({K}(n,k,p))$.
Thus the extremal graphs which achieve the maximal  (signless Laplacian) spectral radius and the minimal
 distance (signless Laplacian) spectral radius  in $\mathcal{C}(n,k)$ must be some $K(n,k,p)$ for $1\leq p\leq n-k-1$.
\end{rem}

\begin{them}
Let $n,k$ be given positive integers with $1\leq k\leq n-2$, ${G}\in \mathcal{C}(n,k).$ Then

(i) {\rm(\cite{2010LAA})} $\rho({G})\leq \rho(K(n,k,1)),$ and $\rho({{K}(n,k,1)}$ is the largest root of equation (\ref{eq51}):
\begin{equation}\label{eq51}
\lambda^3 - (n - 3)\lambda^2 - (n + k - 2)\lambda + k(n - k - 2) = 0,
\end{equation}
with equality holds if and only if $G = K(n,k,1).$

(ii) {\rm(\cite{2010LAA})}
\begin{equation}\label{eq52}
q({G})\leq q(K(n,k,1)) = \frac{2n+k-4+\sqrt{(2n-k-4)^2+8k}}{2},
\end{equation}
with equality holds if and only if $G = K(n,k,1).$

(iii) {\rm(\cite{2012DM2})}
$\rho^\mathcal{D}({G})\geq \rho^\mathcal{D}(K(n,k,1)),$ and $\rho^\mathcal{D}({{K}(n,k,1)}$ is the largest root of equation (\ref{eq53}):
\begin{equation}\label{eq53}
\lambda^3 - (n - 3)\lambda^2 - (5n - 3k - 6)\lambda + kn - k^2 + 2k - 4n + 4= 0,
\end{equation}
with equality holds if and only if $G = K(n,k,1).$

(iv) {\rm(\cite{2013})}
 $q^\mathcal{Q}({G})\geq q^\mathcal{Q}(K(n,k,1)),$ and $q^\mathcal{Q}({{K}(n,k,1)}$ is the largest root of equation (\ref{eq54}):
\begin{equation}\label{eq54}
\lambda^3 - (5n - k -6)\lambda^2 + (8n^2 - 19kn - 24n + 8k + 16)\lambda - 4n^3 + 2(k + 10)n^2 - 2(5k + 16)n + 12k + 16 = 0,
\end{equation}
with equality holds if and only if $G = K(n,k,1).$
\end{them}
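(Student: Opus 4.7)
The plan is to mimic the directed-case argument of Section~4. By Remark~\ref{rem51}, any extremum of the four spectral radii over $\mathcal{C}(n,k)$ is attained on some $K(n,k,p)$ with $1 \le p \le n-k-1$; since $K(n,k,p)\cong K(n,k,n-k-p)$ (the disjoint union $K_p\cup K_{n-k-p}$ is commutative), it suffices to optimise over $1 \le p \le (n-k)/2$. Set $q = n-k-p$. For each $M\in\{A,Q,\mathcal{D},\mathcal{Q}\}$, the block form of $M(K(n,k,p))$ displayed in the excerpt fits the hypothesis of Lemma~\ref{lem34} with $n_1=p,\;n_2=k,\;n_3=q$. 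Applying Lemma~\ref{lem34} yields $\sigma(M) = \sigma(B(M)) \cup \{p_1^{[p-1]},p_2^{[k-1]},p_3^{[q-1]}\}$; each extra eigenvalue $p_i$ is dominated by some diagonal entry of $B(M)$ and hence, by Perron--Frobenius applied to the irreducible positive matrix $B(M)$, strictly less than its Perron root. Consequently the spectral radius in question equals the largest root $\rho_M(p)$ of the characteristic cubic $f_M(\lambda,p)=\det(\lambda I - B(M))$.

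A key symmetry then emerges: the permutation that swaps the first and third block indices conjugates $B(M)(p)$ into $B(M)(n-k-p)$, so $f_M(\lambda,p)$ is invariant under $p\leftrightarrow n-k-p$ and its coefficients in $\lambda$ depend on $p$ only through $pq = p(n-k-p)$. Hence one can write
\begin{equation*}
f_M(\lambda,p) = g_M(\lambda) + pq\cdot h_M(\lambda),
\end{equation*}
and implicit differentiation gives
\begin{equation*}
\frac{d\rho_M}{dp} = -\frac{(q-p)\,h_M(\rho_M)}{f_M'(\rho_M)}.
\end{equation*}
Since $\rho_M$ is the largest real root of a cubic with positive leading coefficient, $f_M'(\rho_M)>0$; on $[1,(n-k)/2]$ the factor $q-p$ is non-negative, so $\mathrm{sgn}(d\rho_M/dp) = -\,\mathrm{sgn}(h_M(\rho_M))$.

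A direct expansion yields $h_A(\lambda)=\lambda+k+1>0$ at $\rho_A\ge 0$ and $h_{\mathcal{D}}(\lambda)=k-3-3\lambda<0$ at $\rho_{\mathcal{D}}\ge n-1$ (lower bound from the minimum transmission $n-1$ in $K(n,k,p)$, via Lemma~\ref{lem24}); these prove (i) and (iii) respectively, and substituting $p=1$ into $f_A$ and $f_{\mathcal{D}}$ reproduces (\ref{eq51}) and (\ref{eq53}). For (ii) and (iv) a structural simplification shortens the argument substantially: using $p+q=n-k$, one verifies by direct computation that $\det(B(Q)-(n-2)I) = 0 = \det(B(\mathcal{Q})-(n-2)I)$, so $\lambda=n-2$ is an eigenvalue of both $B(Q)$ and $B(\mathcal{Q})$ for every admissible $p$. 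Factoring $\lambda-(n-2)$ out of each cubic leaves a quadratic: for $B(Q)$ it is
\begin{equation*}
\lambda^2 - (2n+k-4)\lambda + \bigl[4pq + 2(k-2)(n-1)\bigr],
\end{equation*}
whose larger root is monotonically decreasing in $pq$; for $B(\mathcal{Q})$ it is
\begin{equation*}
\lambda^2 - (4n-k-4)\lambda + \bigl[2(n-1)(2n-k-2) - 4pq\bigr],
\end{equation*}
whose larger root is monotonically increasing in $pq$. Since $pq$ is minimised over $\{1,\ldots,n-k-1\}$ precisely at $p=1$ and at its isomorphic partner $p=n-k-1$, parts (ii) and (iv) follow, with the stated equations (\ref{eq52}) and (\ref{eq54}) obtained by substitution.

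The main obstacle is the symbolic bookkeeping: computing each $B(M)$, deriving its characteristic polynomial in the canonical form $g_M+pq\cdot h_M$, and pinning down the sign of $h_M(\rho_M)$ via an a priori lower bound on $\rho_M$. The observation that $n-2$ is a common eigenvalue of both $B(Q)$ and $B(\mathcal{Q})$ for every $p$ is the central simplification for (ii) and (iv); without it one would be forced to push the full cubic analysis through via implicit differentiation, exactly as is required for (i) and (iii).
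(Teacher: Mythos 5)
Your proposal is correct and follows the same backbone as the paper: reduce to $K(n,k,p)$ via Remark \ref{rem51}, apply Lemma \ref{lem34} to the $3\times 3$ equitable quotient matrices, and then optimise over $p$. The differences are in execution. For (i) and (iii) the paper argues discretely, writing the quotient characteristic polynomial as $f_{p,q}(\lambda)$ and computing $f_{p,q}-f_{p-1,q+1}=(q+1-p)(\lambda+k+1)$ (and analogously for $\mathcal{D}$), whereas you differentiate implicitly in the continuous parameter $p$ using the decomposition $f=g+pq\,h$; this is the same monotonicity-in-$pq$ idea, and your version is fine provided you justify $f'_M(\rho_M)>0$ by the algebraic simplicity of the Perron root (being ``the largest real root of a cubic'' only gives $f'\geq 0$), and provided the a priori bound $\rho_{\mathcal{D}}\geq n-1$ is obtained from $B(\mathcal{D})$ itself (its minimum row sum is $n-1$), so that it is valid for non-integer $p$; also $B(M)$ is irreducible nonnegative, not positive. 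For (ii) your factorisation of $\lambda-(n-2)$ coincides with the paper's explicit eigenvalue computation. For (iv) you genuinely improve on the paper: the paper keeps the full cubic in $p,q,k$ and says ``similar to (i)'', while your observation that $n-2$ is an eigenvalue of $B(\mathcal{Q}(K(n,k,p)))$ for every $p$ reduces (iv) to a quadratic with fixed trace and constant term monotone in $pq$, which is shorter and cleaner. Your explicit handling of the ``extra'' eigenvalues $p_i$ (dominated by a diagonal entry of $B$, hence below its Perron root) fills a step the paper leaves implicit. One caveat: carrying out your substitution $p=1$ gives the linear coefficient $8n^2-3kn-24n+8k+16$, not the $8n^2-19kn-24n+8k+16$ printed in (\ref{eq54}); your computation is consistent with the paper's own general cubic in its proof of (iv), so (\ref{eq54}) as stated appears to contain a typo ($19kn$ for $3kn$), and your claim that substitution ``reproduces'' (\ref{eq54}) should be stated with that correction.
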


\begin{proof}
Firstly, we show (i) holds. We apply Lemma \ref{lem34} to $A=A({K}(n,k,p))$.
Since $t=3$, $l_1=l_2=l_3=1,$ $p_1=p_2=p_3=-1,$  $s_{13}=s_{31}=0$ and $s_{12}=s_{21}=s_{23}=s_{32}=1$, we have $\sigma(A)=\sigma(B(A))\cup\{(-1)^{[n-3]}\},$  where the corresponding equitable quotient matrix of  $A$ is
$B(A)=\left(\begin{array}{ccc}
 p-1 &  k  & 0\\
 p  &  k-1 & q\\
 0  &  k & q-1\\
  \end{array}\right),$
the  eigenvalues  of $B(A)$ are the roots of the equation
\begin{equation}\label{eq55}
\lambda^3 - (n - 3)\lambda^2 + (pq - 2n + 3)\lambda + pq - n + pqk + 1 = 0.\end{equation}
It is obvious that $\rho(A({K}(n,k,p)))$ is the largest root of the equation (\ref{eq55}).

Now we show $\rho(A({K}(n,k,1)))=\max\{\rho(A({K}(n,k,p)))| 1\leq p\leq n-k-1\}$.
We note that $p+q=n-k$ and the   adjacency matrix is symmetric, without loss of generality,
we assume that $q \geq p \geq 1$.
Let $f_{p,q}(\lambda)=\lambda^3 - (n - 3)\lambda^2 + (pq - 2n + 3)\lambda + pq - n + pqk + 1$. % be the left side of the above equation
Let $H= {K_k} \bigtriangledown ({K_{p-1}} \cup {K}_{q+1})={K}(n,k,p-1)$. Obviously, $H \in \mathcal C(n,k)$ and $\rho(H)$ is the largest root of  $f_{p-1,q+1}(\lambda) = 0$, then

$ f_{p,q}(\lambda) - f_{p-1,q+1}(\lambda)$
$=pq\lambda+pq+pqk-(p-1)(q+1)\lambda-(p-1)(q+1)-(p-1)(q+1)k$

\hskip3.65cm $=(q+1-p)(\lambda+k+1) > 0$,

\noindent and

$f_{p,q}(\rho(H)) = f_{p,q}(\rho(H)) - f_{p-1,q+1}(\rho(H))>0=f_{p,q}(\rho({K}(n,k,p))).$

%\hskip2cm $=pq\rho(H)+pq+pqk-(p-1)(q+1)\rho(H)-(p-1)(q+1)-(p-1)(q+1)k$

%\hskip2cm $=(q+1-p)(\rho(H)+k+1) > 0$.
%It implies  that  $f_{p,q}(\rho(H)) \geq 0 = f_{p,q}(\rho({K}(n,k,p)))$ for $\lambda \geq \rho(G)$,
 It implies $\rho(H)=\rho({K}(n,k,p-1)) > \rho({K}(n,k,p))$.
 Thus $\rho({G})\leq \rho(K(n,k,1))$,  $\rho({{K}(n,k,1)})$ is the largest root of the equation (\ref{eq51}),
 $\rho({G})=\rho(K(n,k,1))$ if and only if $G = K(n,k,1).$

Second, we show (ii) holds. We apply Lemma \ref{lem34} to $Q=Q({K}(n,k,p))$.
 Since $t=3$, $l_1=l_2=l_3=1,$  $p_1=p+k-2, $ $p_2=n-2,$ $p_3=n-p-2,$  $s_{13}=s_{31}=0$ and $s_{12}=s_{21}=s_{23}=s_{32}=1$, we have $\sigma(Q)=\sigma(B(Q))\cup\{(p+k-2)^{[p-1]},(n-2)^{[k-1]},(n-p-2)^{[q-1]}\}$ where the corresponding equitable  quotient matrix of $Q$ is
$$B(Q)=\left(\begin{array}{ccc}
 2p+k-2 &  k  & 0\\
 p  &  k+n-2 & q\\
 0  &  k & q+n-p-2\\
  \end{array}\right), $$
the eigenvalues of $B(Q)$ are $n-2, n-2+\frac{k}{2}\pm\frac{1}{2}\sqrt{(k-2n)^2+16p(k-n+p)}$.
Thus $\rho(Q)=n-2+\frac{k}{2}+\frac{1}{2}\sqrt{(k-2n)^2+16p(k-n+p)}.$

Let $f(p)=n-2+\frac{k}{2}+\frac{1}{2}\sqrt{(k-2n)^2+16p(k-n+p)}$, then $f(1)=f(n-k-1)=\max\{f(p) | 1\leq p\leq n-k-1\}.$
 Therefore, $q({G})\leq \frac{2n+k-4+\sqrt{(2n-k-4)^2+8k}}{2},$ and we complete the proof of (ii) by ${K}(n,k,1)\cong {K}(n,k,n-k-1)$.

Third, we show (iii) holds. We apply Lemma \ref{lem34} to $\mathcal{D}=\mathcal{D}({K}(n,k,p))$.
Since $t=3$, $l_1=l_2=l_3=1,$  $p_1=p_2=p_3=-1,$  $s_{13}=s_{31}=2$  and $s_{12}=s_{21}=s_{23}=s_{32}=1$, we have $\sigma(\mathcal{D})=\sigma(B(\mathcal{D}))\cup\{(-1)^{[n-3]}\}$ where the corresponding equitable quotient matrix of $\mathcal{D}$ is
$$B(\mathcal{D})=\left(\begin{array}{ccc}
 p-1 &  k  & 2q\\
 p  &  k-1 & q\\
 2p  &  k & q-1\\
  \end{array}\right), $$
the eigenvalues  of $B(\mathcal{D})$ are the roots of the equation:
\begin{equation}\label{eq56}
\lambda^3-(n-3)\lambda^2-(3pq+2n-3)\lambda+pqk-3pq-n+1=0.\end{equation}
It is obvious that $\rho(\mathcal{D}({K}(n,k,p)))$ is the largest root of the equation (\ref{eq56}).

Similar to the proof of (i), we can show (iii) holds, we omit it.

Finally,  we show (iv) holds. We apply Lemma \ref{lem34} to $\mathcal{Q}=\mathcal{Q}({K}(n,k,p))$.
 Since $t=3$, $l_1=l_2=l_3=1,$ $ p_1=n+q-2,$ $p_2=n-2,$ $p_3=n+p-2,$ $ s_{13}=s_{31}=2$ and $s_{12}=s_{21}=s_{23}=s_{32}=1$, we have $\sigma(\mathcal{Q})=\sigma(B(\mathcal{Q}))\cup\{(n+q-2)^{[p-1]},(n-2)^{[k-1]},(n+p-2)^{[q-1]}\} $
 where the corresponding equitable quotient matrix of $\mathcal{Q}$ is
$$B(\mathcal{Q})=\left(\begin{array}{lcr}
 n+p+q-2 &  k  & 2q\\
 p  &  n+k-2 & q\\
 2p  &  k & n+p+q-2\\
  \end{array}\right),$$
 the eigenvalues of $B(\mathcal{Q})$ are the roots of the equation:
$\lambda^3-(5p+5q+4k-6)\lambda^2+(8p^2+8q^2+5k^2+12pq+13pk+13qk
  -20p-20q-16k+12)\lambda-4p^3-4q^3-2k^3-8p^2q-8pq^2-10p^2k
  -10q^2k-8pk^2-8qk^2-16pqk+16p^2+16q^2+10k^2+24pq+26pk+26qk
  -20p-20q-16k+8=0.$

Similar to the proof of (i), we can show (iv) holds, we omit it.
\end{proof}

It is natural that  whether there exists similar result for
 the  Laplacian spectral radius or the  distance Laplacian spectral radius  in $\mathcal{C}(n,k)$ or not?
 In fact, we can obtain the spectrum of  the  Laplacian matrix or the  distance Laplacian matrix of $K(n,k,p)$ immediately.

\begin{prop}\label{prop52}
Let $K(n,k,p)$ defined as before. Then

(i). $\sigma(L(K(n,k,p)))=\{0, k, n^{[k]},(p+k)^{[p-1]}, (q+k)^{[q-1]}\}.$

(ii). $\sigma(\mathcal{L}(K(n,k,p)))=\{0, n+p+q, n^{[k]}, (n+q)^{p-1},(n+p)^{q-1}\}.$
\end{prop}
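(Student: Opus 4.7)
The plan is to mimic the strategy used for the signless Laplacian cases in Section 5 and apply Lemma \ref{lem34} directly. First I would write down $L = L(K(n,k,p))$ and $\mathcal{L}=\mathcal{L}(K(n,k,p))$ in the block form \eqref{eq31} using the partition $V = V(K_p) \cup V(K_k) \cup V(K_q)$ with $q=n-p-k$. The degree of every vertex of $K_p$ (resp.\ $K_k$, $K_q$) in $K(n,k,p)$ is $p+k-1$ (resp.\ $n-1$, $q+k-1$), while the transmission is $n+q-1$ (resp.\ $n-1$, $n+p-1$). Subtracting the adjacency and distance matrices written in Section 5, I get that $L$ matches Lemma \ref{lem34} with $l_1=l_2=l_3=-1$, $p_1=p+k$, $p_2=n$, $p_3=q+k$, $s_{12}=s_{21}=s_{23}=s_{32}=-1$, $s_{13}=s_{31}=0$, and $\mathcal{L}$ matches with the same $l_i$'s and $s_{ij}$'s except that $s_{13}=s_{31}=-2$, and $p_1=n+q$, $p_2=n$, $p_3=n+p$.

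By Lemma \ref{lem34} this immediately contributes the eigenvalues $\{(p+k)^{[p-1]}, n^{[k-1]}, (q+k)^{[q-1]}\}$ for $L$ and $\{(n+q)^{[p-1]}, n^{[k-1]}, (n+p)^{[q-1]}\}$ for $\mathcal{L}$, accounting for $n-3$ of the eigenvalues in each case. The remaining three eigenvalues of $L$ (resp.\ $\mathcal{L}$) are the eigenvalues of the $3\times 3$ quotient matrix
\[
B(L)=\begin{pmatrix} k & -k & 0 \\ -p & n-k & -q \\ 0 & -k & k \end{pmatrix}, \qquad
B(\mathcal{L})=\begin{pmatrix} n+q-p & -k & -2q \\ -p & n-k & -q \\ -2p & -k & n+p-q \end{pmatrix}.
\]
For both matrices, the row sums vanish (using $p+k+q=n$), so $0$ is an eigenvalue with eigenvector $(1,1,1)^T$. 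For $B(L)$, a direct expansion shows that
\[
\det(\lambda I - B(L)) = (\lambda - k)\bigl[(\lambda-k)(\lambda - n + k) - k(p+q)\bigr] = \lambda(\lambda-k)(\lambda-n),
\]
so $\sigma(B(L)) = \{0,k,n\}$, and combining with the Lemma \ref{lem34} multiplicities proves (i).

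For $\mathcal{L}$, the main (and essentially the only) obstacle is that $B(\mathcal{L})$ is less symmetric than $B(L)$, so I would verify the two nontrivial eigenvalues by substitution rather than brute expansion. Plugging in $\lambda=n$ gives a matrix in which a combination of row operations reveals two proportional rows (after $R_1-R_2$ and $R_3-2R_2$), hence $n$ is an eigenvalue. Plugging in $\lambda=2n-k=n+p+q$ and using $n-k=p+q$ reduces the first and third rows to $(2p,\,k,\,2q)$, which are equal, so $2n-k$ is an eigenvalue. Since the three candidates $0,\,n,\,2n-k$ sum to $3n-k=\mathrm{tr}\,B(\mathcal{L})$, they exhaust the spectrum of $B(\mathcal{L})$, and together with the multiplicities from Lemma \ref{lem34} this yields (ii). No further ingredients are needed beyond Lemma \ref{lem34} and these elementary $3\times 3$ determinantal calculations.
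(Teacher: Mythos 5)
Your proposal is correct and follows essentially the same route as the paper: the same block decomposition of $L(K(n,k,p))$ and $\mathcal{L}(K(n,k,p))$, the same application of Lemma \ref{lem34}, and the identical quotient matrices $B(L)$ and $B(\mathcal{L})$, whose spectra $\{0,k,n\}$ and $\{0,n,n+p+q\}$ the paper leaves as ``directly calculating.'' One trivial slip: at $\lambda=n$ the operations $R_1-R_2$ and $R_3-2R_2$ yield $(-q,0,q)$ and $(0,-k,-p-q)$, which are not proportional (use $R_1-R_2$ and $R_2-R_3$, both proportional to $(-1,0,1)$), but $\det\bigl(nI-B(\mathcal{L})\bigr)=0$ all the same, so the conclusion stands.
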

\begin{proof}
Firstly, the  Laplacian matrix $L(K(n,k,p))$ and
the  distance Laplacian matrix $\mathcal{L}(K(n,k,p))$ of $K(n,k,p)$ are the following matrices,
where $q=n-p-k$.

$$L=L(K(n,k,p)) = \left(\begin{array}{ccc}
  -J_p+(p+k)I_p &  -J_{p,k} & \mathbf{0}_{p,q}\\
  -J_{k,p} & -J_k+nI_k  &  -J_{k,q}\\
  \mathbf{0}_{q,p} &  -J_{q,k} & -J_q+(q+k)I_q\\
  \end{array}\right),$$
$$\mathcal{L}=\mathcal{L}(K(n,k,p)) = \left(\begin{array}{ccc}
 -J_p+(n+q)I_p &  -J_{p,k} &  -2J_{p,q}\\
  -J_{k,p} & -J_k+nI_k &  -J_{k,q}\\
  -2J_{q,p} &  -J_{q,k} &  -J_q+(n+p)I_q\\
  \end{array}\right).$$

Then the corresponding equitable quotient matrices are as follows:
$$B(L)= \left(\begin{array}{ccc}
 k &    -k  & 0\\
 -p  &    n-k  &  -q\\
 0   &   -k  &  k\\
  \end{array}\right), \qquad B(\mathcal{L})= \left(\begin{array}{ccc}
 n+q-p &    -k  & -2q\\
 -p  &    n-k  &  -q\\
 -2p   &   -k  &  n+p-q\\
  \end{array}\right).$$

  Then by Lemma \ref{lem34} and directly calculating, we obtain (i) and (ii).
\end{proof}

\vskip1cm

\end{document}